\documentclass[12pt,reqno]{amsart}
\usepackage{amsaddr}
\usepackage{amssymb,latexsym,amsmath,epsfig,amsthm,mathrsfs}
\usepackage{rotating}
\usepackage{graphicx}
\usepackage{amssymb}
\usepackage{lineno}
\usepackage{enumitem}
\usepackage{cite}
\usepackage{bm}
\usepackage[top=3cm, bottom=3cm, left=3cm, right=3cm]{geometry}
\usepackage[usenames]{color}
\usepackage[colorlinks=true,
linkcolor=blue,
filecolor=blue,
citecolor=blue]{hyperref}

\usepackage{mathtools}

\makeatletter
\DeclarePairedDelimiterX{\pmodx}[1]{(}{)}{{\operator@font mod}\mkern6mu#1}
\renewcommand{\pmod}{%
	\allowbreak
	\if@display\mkern18mu\else\mkern8mu\fi
	\pmodx
}
\makeatother

\newtheorem{theorem}{Theorem}[section]

\newtheorem{lemma}[theorem]{Lemma}
\theoremstyle{definition}

\theoremstyle{definition}
\newtheorem{remark}[theorem]{Remark}
\theoremstyle{theorem}
\newtheorem{conjecture}[theorem]{Conjecture}

\numberwithin{equation}{section}

\begin{document}
	
\title[Restricted set addition in finite abelian groups]{Restricted set addition in finite abelian groups}
	
\author[V Goswami]{Vivekanand Goswami}	\address{\em{\small Department of Mathematics, Indian Institute of Technology Bhilai, Durg – 491001, Chhattisgarh, India\\
email: vivekanandg@iitbhilai.ac.in}}

	
\author[R K Mistri]{Raj Kumar Mistri$^{*}$}
\address{\em{\small Department of Mathematics, Indian Institute of Technology Bhilai, Durg – 491001, Chhattisgarh, India\\
email: rkmistri@iitbhilai.ac.in}}
	
\thanks{$^{*}$Corresponding author}
		
\subjclass[2020]{Primary 11B13; Secondary 11P70, 11B75, 20D60}

\keywords{Sumsets, $h$-fold sumsets, Restricted $h$-fold sumsets, Critical numbers, Finite abelian groups, Additive combinatorics.}

\begin{abstract}
Let $A$ be a nonempty subset of finite abelian group $G$ of order $n$. For an integer $h \geq 2$, the restricted $h$-fold sumset $h^\wedge A$ is the set of all sums of $h$ distinct elements of $A$. It is known that if $G$ is a group of order $n$ and $A$ is a subset of $G$ such that $|A|$ is close to $\frac{n}{2}$, then $h^{\wedge}A = G$ under some conditions on $h$ and $n$. The constant $\frac{1}{2}$ is optimal for groups of even order but not for groups of odd order. For an integer $h \geq 4$, let $\alpha_h$ be the unique positive root of the polynomial $3^{h - 2} x^{h - 1} + x - 1$. In this paper, we show that for any $\alpha > \alpha_h$, there exists a positive integer $M_h(\alpha)$, which is determined precisely, such that for all $n > M_h(\alpha)$ with $n$ odd, if $A$ is a subset of a finite abelian group $G$ of order $n$ and if $|A| \geq \alpha n$, then $h^{\wedge} A = G$. Moreover, $\alpha_h > \alpha_{h + 1}$ for $h \geq 4$ and $\alpha_h$ approaches $\frac{1}{3}$ as $h$ increases, and the constant $\frac{1}{3}$ is optimal when the smallest prime dividing $n$ is $3$. This result extends a theorem of Tang and Wei on $4^{\wedge}A$ in the cyclic group $\mathbb{Z}_n$ to $h^{\wedge}A$ for every $h \geq 4$, and to arbitrary finite abelian groups.
\end{abstract}		
	
\maketitle

\tableofcontents
		
\section{Introduction and Main Result}
Let G be an additive abelian group of order $n$, and let $\mathbb{Z}_n$ be a cyclic group of order $n$. 
Let $h \geq 2$ be an integer, and let $A$ be a subset of $G$. The cardinality of a set $A$ is denoted by $|A|$. The \emph{$h$-fold sumset} of $A$, denoted by $hA$, is defined as
\[hA = \{a_1 + a_2 + \cdots + a_h : a_i \in A\}.\]
The \emph{restricted $h$-fold sumset} of $A$, denoted by $h^\wedge A$, is defined as
\[h^\wedge A = \{a_1 + a_2 + \cdots + a_h : a_i \in A, a_i \neq a_j \ \text{for} \ i \neq j\}.\]

Understanding the growth and extremal behavior of these sumsets is central to problems in additive combinatorics. These kind of problems have been studied extensively in the litearture (see \cite{nath, tao, freiman, mann} and the references given therein). The study of sumsets can be traced back to Cauchy \cite{cauchy}, who proved that if $A$ and $B$ are nonempty subsets of $\Bbb Z_p$, then $|A + B| \geq \min(p, |A| + |B| -1)$, where $\Bbb Z_p$ is the group of prime order $p$, and $A + B = \{a + b: a \in A, b \in B\}$. This result, later rediscovered by Davenport in 1935, is now known as the Cauchy-Davenport Theorem \cite{dav1, dav2}. An immediate consequence of this theorem is that for any nonempty subset $A \subseteq \mathbb{Z}_p$ and any positive integer $h$, the $h$-fold sumset $hA$ satisfies $|hA| \ge \min(p, h|A| - h + 1)$. While the $h$-fold sumset $hA$ has been extensively studied in the literature, relatively few results are known for the restricted $h$-fold sumset $h^{\wedge}A$. The restricted $h$-fold sumset $h^{\wedge}A$ often displays behavior that is noticeably different from the $h$-fold sumset $hA$, revealing subtler additive structures. Because of these unique characteristics, analyzing restricted $h$-fold sumset $h^{\wedge}A$ typically requires methods that are significantly different from those used for the $h$-fold sumset $hA$. In 1964, Erd\H{o}s and Heilbronn conjectured that for a subset $A$ of the cyclic group $\mathbb{Z}_p$,
\begin{equation*}
	|2^\wedge A| \geq \min\{p, 2 |A| - 3\}.
\end{equation*}

For the restricted $h$-fold sumset $h^\wedge A$ in $\mathbb{Z}_p$, a corresponding result was established by Dias da Silva and Hamidoune \cite{dias} using techniques from exterior algebra.
\begin{theorem}[{\cite[Theorem 4.1]{dias}}]  Let  $h$ and $k$ be positive integers such that $h \leq k$. Let $A \subseteq {\mathbb{Z}_p}$  be a nonempty set with $k$ elements. Then
	\[\left|h^\wedge A\right| \geq \min\{p, h k - h^{2} + 1\}.\]
\end{theorem}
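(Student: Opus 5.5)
The approach is the polynomial method, in the form of the Combinatorial Nullstellensatz of Alon, Nathanson and Ruzsa, rather than the exterior-algebra argument of Dias da Silva and Hamidoune. Write $A=\{a_1,\dots,a_k\}$ and set $m = hk - h^2 + 1$. If $m > p$, replace $A$ by a subset $A'$ with $|A'| = k' \ge h$ such that $hk'-h^2+1 \le p$ but $h(k'+1)-h^2+1 > p$. Then the bound for $A'$, together with $h^\wedge A' \subseteq h^\wedge A$, gives either $|h^\wedge A| = p$ or at least $p-h+1$. This reduction needs more care, so it is cleaner to argue by contradiction directly, as follows.

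Suppose $|h^\wedge A| \le \min\{p, m\} - 1$, and choose a set $E \supseteq h^\wedge A$ with $|E| = \min\{p,m\}-1 \le m-1$. Consider the polynomial
\[
f(x_1,\dots,x_h) = \prod_{1\le i<j\le h}(x_j - x_i)\prod_{e\in E}(x_1+\cdots+x_h - e) \in \mathbb{Z}_p[x_1,\dots,x_h].
\]
By construction $f$ vanishes on $A^h$: if two coordinates coincide, the Vandermonde factor vanishes; otherwise the sum lies in $h^\wedge A \subseteq E$. The degree of $f$ is $\binom h2 + |E|$.

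Take $|E| = m-1 = hk-h^2$ when $m \le p$. Then $\deg f = \binom h2 + hk - h^2 = \sum_{i=1}^h (k-i)$, so the exponents $t_i = k-i$ satisfy $t_i < k = |A|$, and their sum equals the total degree. By the Combinatorial Nullstellensatz it suffices to show that the coefficient of $\prod x_i^{k-i}$ in $f$ is nonzero mod $p$. This coefficient equals the coefficient of that monomial in $\prod_{i<j}(x_j-x_i)(x_1+\cdots+x_h)^{N}$ with $N = hk-h^2$. Expanding the Vandermonde as a determinant and the power by the multinomial theorem, the coefficient is
\[
\sum_{\sigma\in S_h}\operatorname{sgn}(\sigma)\,\frac{N!}{\prod_i (k-i-\sigma(i)+1)!},
\]
with suitable indexing. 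This is a determinant of inverse factorials, which by the standard evaluation (ballot numbers / Frobenius formula) equals
\[
\frac{N!\prod_{i<j}(c_j-c_i)}{\prod_i c_i!}
\]
for distinct integers $c_i = k-i$.

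The main obstacle is this determinant evaluation and the verification that it is nonzero mod $p$. The factor $N!$ is nonzero because $N = m-1 < p$, and the differences $c_j - c_i$ are nonzero integers smaller than $h \le k \le p$, so the numerator is prime to $p$. The denominators are harmless provided $k-1 < p$, which holds since $k \le p$.

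If instead $p < m$, then $p \le hk-h^2$, and the same argument runs with $|E| = p-1$ after first shrinking $A$ to $k'$ elements with $hk'-h^2 = p-1$. Such a $k'$ need not exist exactly, so I would instead lower the exponents $t_i$ appropriately, keeping them distinct and below $k$, and use the same determinant formula. This yields $|h^\wedge A| \ge p$, completing the proof.
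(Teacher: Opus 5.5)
The paper gives no proof of this statement. It quotes it from Dias da Silva and Hamidoune, whose proof uses exterior algebra, and only remarks that Alon, Nathanson and Ruzsa later reproved it by the polynomial method. Your final argument, by contradiction with the Combinatorial Nullstellensatz, is exactly that Alon--Nathanson--Ruzsa proof, and it is correct.

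The points you rely on check out. The degree count $\binom h2+(hk-h^2)=\sum_{i=1}^h(k-i)$ is right. The coefficient of $\prod x_i^{c_i}$ in $\prod_{i<j}(x_j-x_i)(x_1+\cdots+x_h)^N$ does equal $N!\prod_{i<j}(c_j-c_i)/\prod_i c_i!$. To see this, write $1/(c-j+1)!=(c)_{j-1}/c!$, where $(c)_{j-1}$ is the falling factorial, a monic polynomial of degree $j-1$ in $c$, and apply the Vandermonde determinant.

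Two small remarks.

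First, you do not need the comment about denominators. The coefficient is an integer $X$ with $X\prod_i c_i!=N!\prod_{i<j}(c_j-c_i)$. So $p\nmid N!\prod_{i<j}(c_j-c_i)$ already forces $p\nmid X$.

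Second, in the case $hk-h^2\ge p$ you leave ``lower the exponents appropriately'' unjustified, so say why suitable exponents exist.
\begin{itemize}
\item The sums of $h$ distinct elements of $\{0,1,\dots,k-1\}$ fill the whole integer interval $\bigl[\binom h2,\,hk-\binom{h+1}2\bigr]$. You can pass from $\{0,\dots,h-1\}$ to $\{k-h,\dots,k-1\}$ by raising one element by $1$ at a time while keeping the elements distinct.
\item The target $\binom h2+p-1$ lies in this interval because $p-1\le hk-h^2$.
\item With such $t_i$ and $N=p-1$, the factor $(p-1)!$ is prime to $p$. The differences $t_j-t_i$ are nonzero with absolute value at most $k-1\le p-1$.
\end{itemize}
So the coefficient is nonzero mod $p$, and the contradiction with $|E|=p-1$ follows exactly as in the first case. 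You correctly abandoned your initial ``shrink $A$'' reduction, since it only yields $p-h+1$.

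Compared with the exterior-algebra route, this argument is shorter. It also extends at once to restricted sums $a_1+\cdots+a_h$ with $a_i$ drawn from different sets $A_i$ of distinct sizes. Effectively you are already using this, with exponents $k-1,\dots,k-h$, just as one would with subsets of $A$ of sizes $k,k-1,\dots,k-h+1$.
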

This result was subsequently reproved by Alon, Nathanson, and Ruzsa \cite{alon1, alon2} using the polynomial method, a powerful tool for solving various problems in additive combinatorics. In the special case $h = 2$, this theorem corresponds to the Erd\H{o}s–Heilbronn conjecture, originally proposed by Erd\H{o}s and Heilbronn \cite{erdos1964} in 1964.

Another important problem associated with the restricted $h$-fold sumset $h^{\wedge}A$ is the following: Given a finite abelian group $G$ and an integer $h \geq 2$, how large does a set $A \subseteq G$ need to be to guarantee that the set of all sums of $h$ distinct elements covers the whole group $G$, that is, $h^{\wedge}A = G$? This problem has been investigated by various researchers.

In 1999, Gallardo, Grekos and  Pihko proved the following result on $2^\wedge A$ in \cite{gallardo1999}.
\begin{theorem}[{\cite[Lemma 3]{gallardo1999}}]	
Let $A \subseteq \mathbb{Z}_n$ such that $|A| > n/2 + 1$, then $2^\wedge A = \mathbb{Z}_n$.
\end{theorem}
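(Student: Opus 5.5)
To show that every $g \in \mathbb{Z}_n$ lies in $2^\wedge A$, we count solutions of $a + b = g$ with $a, b \in A$ and then discard the bad ones, namely those with $a = b$. The idea is that $A$ and $g - A$ are too large to meet in only a few points.

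Fix $g \in \mathbb{Z}_n$ and put $B = g - A = \{g - a : a \in A\}$, so $|B| = |A|$. Inclusion–exclusion in $\mathbb{Z}_n$ gives
\[
|A \cap B| \geq |A| + |B| - n = 2|A| - n > 2\left(\tfrac{n}{2} + 1\right) - n = 2.
\]
Since $|A \cap B|$ is an integer, $|A \cap B| \geq 3$. Each $a \in A \cap B$ satisfies $a \in A$ and $g - a \in A$, so it yields a representation $g = a + (g - a)$ with both summands in $A$.

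It remains to ensure that for at least one such $a$ the two summands are distinct, i.e.\ $a \neq g - a$, or equivalently $2a \neq g$. This is the one real obstacle: in $\mathbb{Z}_n$ the equation $2x = g$ can have up to two solutions when $n$ is even. When $n$ is odd, doubling is a bijection, so there is at most one solution. When $n$ is even, the map $x \mapsto 2x$ has kernel $\{0, n/2\}$ of size $2$, so the equation has either no solution or exactly two.

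In every case at most two elements $a \in A \cap B$ satisfy $2a = g$. Since $|A \cap B| \geq 3$, some $a \in A \cap B$ has $a \neq g - a$, and then $g = a + (g - a) \in 2^\wedge A$. As $g$ was arbitrary, $2^\wedge A = \mathbb{Z}_n$.

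The bound $|A| > n/2 + 1$ is exactly what is needed to get $|A \cap B| \geq 3$ and so beat the two possible diagonal solutions.
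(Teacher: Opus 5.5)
Your proof is correct and complete: $|A\cap(g-A)|\ge 2|A|-n\ge 3$, at most two of these elements satisfy $2a=g$, so some $a$ gives $g=a+(g-a)$ with distinct summands.

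The paper does not prove this statement; it quotes it from \cite{gallardo1999} as background, so there is no in-paper argument to match. Your proof is, however, exactly the $h=2$ case of the paper's framework. For $h=2$ the only partitions are $(1,1)$ and $(0,2)$, and Lemma~\ref{identity 1} becomes $R(m)=R_1(m)-R_2(m)$, with $R_1(m)=|A\cap(m-A)|$ and $R_2(m)=|\{a\in A:2a=m\}|$. Your two steps are a lower bound for $R_1(m)$ and an upper bound for $R_2(m)$, as in Section~\ref{proof}.

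The differences are instructive. Your pigeonhole bound $R_1(m)\ge 2|A|-n$ is stronger than what the character-sum/Parseval estimate of Section~\ref{proof} gives for $h=2$, namely $R_1(m)\ge 2|A|^2/n-|A|$; the difference is $(2|A|-n)(n-|A|)/n\ge 0$. Fourier analysis only pays off for $h\ge 3$. There the factor $(\max_{g\ne 0}|S_A(g)|)^{h-2}\le (n/3)^{h-2}$ from Lemma~\ref{max ablian} pushes the density threshold below $1/2$, and that lemma needs $n$ odd. Likewise, your bound $R_2(m)\le 2$ via the kernel $\{0,n/2\}$ of doubling handles even $n$, whereas the analogous counting in Lemma~\ref{bound R} relies on $n$ being odd so that halving is unique.

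So your elementary route is the natural one at density $1/2$ and works for every $n$. The paper's machinery is what is needed to go below $1/2$ for odd $n$.
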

	
In 2002, Gallardo \textit{et al.} obtained the following results (see \cite{gallardo2002}) in connection with possible extensions of a well-known theorem of Erd\H{o}s, Ginzburg, and Ziv. 

\begin{theorem}[{\cite[Proposition 3.4]{gallardo2002}}]\label{3^A odd}
Let $n$ be an odd positive integer. For any 
\[\alpha > \alpha_{0} = \frac{\left(\sqrt{13} - 1\right)}{6},\]
there exists 
\[N = N(\alpha) = \frac{9}{3 \alpha^{2} + \alpha - 1}\]
such that for all $n > N$ and $A \subseteq \mathbb{Z}_n$, if $|A| \geq \alpha n$, then we have 
\[3 ^\wedge A = \mathbb{Z}_n.\]
\end{theorem}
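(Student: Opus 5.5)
The plan is to fix an arbitrary $x \in \mathbb{Z}_n$ and to produce three \emph{distinct} elements $a,b,c \in A$ with $a+b+c = x$ by a double counting argument. Throughout, write $k = |A| \geq \alpha n$, $C = \mathbb{Z}_n \setminus A$ (so $|C| = n-k$) and $r(y) = |\{(a,b) \in A^2 : a+b = y\}|$.

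\textbf{Step 1: degenerate triples are few.} Count ordered pairs $(a,b) \in A^2$ with $a \neq b$; there are $k(k-1)$ of them. For each such pair put $y = x - a - b$. If $y \in A$ and $y \notin \{a,b\}$, then $x = a+b+y$ is a sum of three distinct elements of $A$ and we are done. The cases $y = a$ or $y = b$ mean $2a + b = x$ or $a + 2b = x$. Since $n$ is odd, multiplication by $2$ is a bijection of $\mathbb{Z}_n$, so for each $b$ there is at most one $a$ with $2a+b = x$. Hence at most $2k$ pairs are degenerate.

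\textbf{Step 2: the bad assumption forces many pairs into $C$.} Suppose $x \notin 3^\wedge A$. Then at least $k(k-1) - 2k = k(k-3)$ ordered pairs have $x - a - b \in C$, that is,
\[
\sum_{y \in C} r(x-y) \;\geq\; k^2 - 3k .
\]
The trivial bound $r(\cdot) \leq k$ only yields $k(k-3) \leq k(n-k)$, which requires $\alpha > 1/2$. The argument therefore needs a sharper upper bound for the left-hand side.

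\textbf{Step 3: a sharper upper bound, the expected main obstacle.} The target threshold $3\alpha^2 + \alpha - 1 > 9/n$, i.e. $3k^2 + kn - n^2 > 9n$, suggests that one must use the assumption $x \notin 3^\wedge A$ a second time, symmetrically in all three coordinates. I would first try the following. For $y \in C$, a pair $(a,b)$ with $a+b = x-y$ also has the property that, for every third element $c \in A$, the value $x - a - c$ must lie in $C \cup \{a, c\}$. This should cap the popular values $r(x-y)$ in terms of $|C|$.

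Concretely, I would split $C$ into values with $r(x-y) \geq t$ and the rest, and bound the number of large values. Each large value $y$ gives many $a \in A$ with $x - y - a \in A$, and applying Step 1 to the resulting triples should show that these $a$'s are forced into $C$-translates. Balancing the two parts should give an inequality of the shape
\[
k^2 - 3k \;\leq\; \tfrac{1}{3}\bigl((n-k)n + \ldots\bigr),
\]
with the factor $3$ coming from the three choices of which summand is ``the $C$-coordinate''.

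An alternative, if that fails, is a Cauchy--Schwarz bound
\[
\sum_{y \in C} r(x-y) \;\leq\; \sqrt{|C|\,E(A)},
\]
where $E(A)$ is the additive energy of $A$, with $E(A)$ controlled via the same forbidden-sum structure. Either way, this step is where the precise polynomial $3\alpha^2 + \alpha - 1$ has to emerge, and I expect it to be the hard part.

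\textbf{Step 4: conclusion.} Once an inequality of the form $3k^2 + kn - n^2 \leq 9n$ is derived from $x \notin 3^\wedge A$, it contradicts $k \geq \alpha n$ together with $n > 9/(3\alpha^2 + \alpha - 1)$. Hence every $x$ lies in $3^\wedge A$, that is, $3^\wedge A = \mathbb{Z}_n$. Finally, $\alpha > (\sqrt{13}-1)/6$ is exactly the condition $3\alpha^2 + \alpha - 1 > 0$, which is what makes $N(\alpha)$ positive and finite.
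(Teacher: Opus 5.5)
\textbf{There is a genuine gap: Step 3, which carries the whole argument, is missing.}

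Your Steps 1--2 are correct, and they are the easy half of the argument. Since $\sum_{y\in\mathbb{Z}_n} r(x-y)=k^2$, the bound $\sum_{y\in C} r(x-y)\ge k^2-3k$ is equivalent to
\[
R_1(x):=\#\{(a,b,c)\in A^3 : a+b+c=x\}=\sum_{y\in A} r(x-y)\le 3k .
\]
This is the $h=3$ case of the identity $R=R_1-3R_2+2R_3$ from Lemma~\ref{identity 1}, combined with $R_2\le k$.

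Step 3 is not supplied, and neither suggested route contains the input that produces $3\alpha^2+\alpha-1$. The popularity-splitting idea is never carried out. The constant $\tfrac13$ does not come from ``three choices of the $C$-coordinate''. The Cauchy--Schwarz route with the trivial bound $E(A)\le k^3$ only gives $(k-3)^2\le (n-k)k$, which again covers only $\alpha>\tfrac12$. You also give no mechanism by which the single forbidden value $x$ would control $E(A)$. As written, the proposal proves the statement only for $|A|>(n+3)/2$, which is what the trivial bound $r\le k$ already gives.

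The missing ingredient is an unconditional lower bound for $R_1(x)$ via characters; this is the $h=3$ case of the argument in the proof of Theorem~\ref{h^A}. Write $R_1(x)=\frac1n\sum_{g\in G}S_A(g)^3\chi_{-x}(g)$, separate the term $g=0$, and apply Parseval (Lemma~\ref{cha-identity 2}):
\[
R_1(x)\ \ge\ \frac{k^3}{n}-\Bigl(\max_{g\ne 0}|S_A(g)|\Bigr)\Bigl(k-\frac{k^2}{n}\Bigr)\ \ge\ \frac{k^3}{n}-\frac{n}{3}\Bigl(k-\frac{k^2}{n}\Bigr).
\]
The decisive estimate is $|S_A(g)|\le n/3$ for $g\neq 0$ (Lemma~\ref{max ablian}), and this is where oddness of $n$ really enters. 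Every nontrivial character has order $d\ge 3$, and Lemma~\ref{max sum} gives $|S_A(g)|\le \frac{n/d}{2\sin(\pi/2d)}\le \frac{n}{3}$, with the worst case at $d=3$. That is the true source of the $\tfrac13$.

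Combining this with $R_1(x)\le 3k$ gives $3k^2+kn-n^2\le 9n$, and your Step 4 then finishes the proof. In your notation, the bound Step 3 needed is
\[
\sum_{y\in C} r(x-y)=k^2-R_1(x)\le \frac{k(n-k)(n+3k)}{3n}.
\]
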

	
\begin{theorem}[{\cite[Theorem 3.1]{gallardo2002}}]\label{3^A}
For any integer $n \geq 12$, except $n = 15$, and for any subset 
$A \subseteq \mathbb{Z}_n$ such that $|A| > \frac{n}{2}$, one has 
\[3^\wedge A = \mathbb{Z}_n.\]
\end{theorem}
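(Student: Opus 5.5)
The plan is to fix $g \in \mathbb{Z}_n$ and count representations $g = a_1 + a_2 + a_3$ with $a_1,a_2,a_3 \in A$ pairwise distinct. Write $k = |A| > n/2$. The first step is to show that the number of ordered pairs $(a,b) \in A \times A$ with $a+b = x$ is at least $2k - n$ for every $x$. The reason is that $a \in A$ and $x - a \in A$ is the intersection of $A$ with $x - A$, and both sets have size $k$.

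The second step counts triples. For a fixed $a_1 \in A$ we need $a_2 + a_3 = g - a_1$, which gives at least $2k-n$ ordered pairs for each $a_1$, so at least $k(2k-n)$ ordered triples in total. From these we must remove the degenerate ones, in which two coordinates coincide.

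\begin{itemize}
\item Triples with $a_2 = a_3$ satisfy $2a_2 = g - a_1$, so $a_1$ determines the number of $a_2$ by the number of solutions of $2y = g - a_1$. This number is at most $|\{y : 2y = 0\}|$, which is $1$ or $2$ according to the parity of $n$.
\item The cases $a_1 = a_2$ and $a_1 = a_3$ are handled similarly, via $2a_1 + a_3 = g$. Each $a_3$ gives at most $2$ values of $a_1$.
\end{itemize}

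Hence the number of degenerate ordered triples is at most $6k$. The count of good ordered triples is then at least $k(2k-n) - 6k = k(2k - n - 6)$, which is positive when $2k > n + 6$, that is, when $k > n/2 + 3$.

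The remaining range $n/2 < k \le n/2 + 3$ is the main obstacle. Here I would use a sharper structural argument.

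\begin{itemize}
\item If $|2^\wedge A| = n$, which holds when $k > n/2 + 1$ by the quoted Lemma of Gallardo, Grekos and Pihko, then for each $g$ I pick a representation $g - a = b + c$ with $b \ne c$ and $a \notin \{b,c\}$. This requires showing that $g - a$ has a representation avoiding $a$ for some $a \in A$. Averaging over $a$, the number of representations of $g - a$ is at least $2k - n \ge 2$, and the representations involving $a$ number at most $2$ per value. Some slack arises unless equality holds everywhere, and the equality case forces $A$ to be essentially a coset-union structure.
\item For $k = \lfloor n/2 \rfloor + 1$ I would analyze via Kneser's theorem: if $3^\wedge A \ne \mathbb{Z}_n$, then the counting bounds force $A$ to be nearly a union of cosets of a subgroup $H$ of index $2$ (so $n$ even), or to fall into a small explicit exceptional family.
\end{itemize}

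For small $n$, these exceptional configurations are checked directly. This check should produce exactly the threshold $n \ge 12$ and the exception $n = 15$, where a subgroup of order $3$ and index $5$ allows $A$ to be a union of three cosets, of size $9 > 7.5$, with $3^\wedge A$ missing a coset.
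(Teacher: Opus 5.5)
The paper does not prove this statement; it quotes it from Gallardo et al.\ without proof. Your proposal therefore has to stand on its own, and it has a genuine gap: the rigorous part proves nothing toward the theorem. Because $3^\wedge A$ is monotone in $A$, the theorem is equivalent to its special case $|A| = \lfloor n/2\rfloor + 1$. Your counting only handles $|A| > n/2+3$. Sharpening the degenerate count to $3k$ (in each degenerate pattern the repeated element determines the third) still only reaches $2k-n\ge 4$.

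In the decisive case, say $n$ even and $k=n/2+1$, the pair bound yields just $k(2k-n)=2k$ ordered triples against as many as $3k$ degenerate ones. For odd $n$, the bound $|S_A(g)|\le n/3$ (Lemma~\ref{max ablian}) gives $R_1(g)\ge n^2/24$ and disposes of every odd $n>36$. For even $n$ there is no such Fourier rescue, since the character of order $2$ can have $|S_A|$ close to $k$.

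Everything you say about this range is assertion: ``some slack arises unless equality holds everywhere'', ``Kneser's theorem forces $A$ to be nearly a union of cosets of an index-$2$ subgroup''. You do not say how Kneser's theorem, a statement about unrestricted sums $A+B$, would be applied to a restricted sumset. You do not say what ``nearly'' means; note that $|A|>n/2$ already forbids $A$ from lying in a single coset of an index-$2$ subgroup. Nor do you show why the resulting near-extremal sets still satisfy $3^\wedge A=\mathbb{Z}_n$. The finite check that is supposed to produce exactly $n\ge 12$, $n\neq 15$ is never carried out.

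Your account of the exception at $n=15$ is also incorrect, which indicates the structural picture is off. Take $H=\{0,5,10\}$ and let $A$ be a union of three $H$-cosets with residue set $S\subset\mathbb{Z}_5$. Since $2^\wedge H=H$, sums taking two elements from class $r$ and one from class $r'\neq r$ cover the whole coset $2r+r'+H$. Sums from three distinct classes cover $s_1+s_2+s_3+H$. Every $3$-subset of $\mathbb{Z}_5$ is an arithmetic progression, so these residues exhaust $\mathbb{Z}_5$; for $S=\{0,1,2\}$ they are $\{0,1,2,4\}\cup\{3\}$. Hence $3^\wedge A=\mathbb{Z}_{15}$ for every such $A$.

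The actual obstruction comes from the index-$3$ subgroup $K=\{0,3,6,9,12\}$. Take $A=\{3,6,9,12\}\cup\{1,4,7,13\}$, so $|A|=8>15/2$. A sum of three elements of $A$ is divisible by $3$ only if all three lie in the same $K$-coset. Moreover $3^\wedge\{3,6,9,12\}=3^\wedge\{1,4,7,13\}=\{3,6,9,12\}$, so $0\notin 3^\wedge A$. A complete proof has to treat the minimal case $|A|=\lfloor n/2\rfloor+1$ head-on, with an argument that fails precisely for configurations like this one.
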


The following conjecture was stated in \cite{gallardo2002}.
\begin{conjecture}[{\cite[Conjecture 3.9]{gallardo2002}}]
There is a constant $c$, such that for any odd integer $n$ and for any subset $A$ of $\mathbb{Z}_n$ such that $|A| > \frac{2}{5}n + c$, one has $3^{\wedge}A = \mathbb{Z}_n$.
\end{conjecture}
	
This conjecture was proved by Lev in $2002$ by establishing the following result for $3^\wedge A$ in arbitrary finite abelian group \cite{lev2002}.

\begin{theorem}\label{lev}
Let $G$ be a finite abelian group, and let $G_0 = \{g \in G: 2g = 0\}$. Let $A$ be a subset of $G$ such that
\[|A| > \max\left\{\frac{5}{13}|G|,  120\left|G_0\right| + 355\right\}.\]
Then either $3^{\wedge}A = G$, or $A$ is contained in a coset of an index two subgroup of $G$, or $A$ is contained in a union of two cosets of an index five subgroup of $G$.
\end{theorem}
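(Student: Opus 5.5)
We would attack Theorem~\ref{lev} by a counting argument followed by a structural analysis via Kneser's theorem. Set $A$ with $|A|>\frac{5}{13}|G|$ and suppose some $g\in G$ is not in $3^{\wedge}A$. For $a\in A$ put $A_a=A\setminus\{a\}$. Then $g-a\notin 2^{\wedge}A_a$ for every $a\in A$. Equivalently, for each $a$, the set $g-a-A$ meets $A$ only in pairs $\{x,y\}$ with $x=y$, or with $x=a$ or $y=a$. Hence
\[
\bigl|(g-a-A)\cap A\bigr|\le |\{x: 2x=g-a\}|+2\le |G_0|+2 .
\]
Summing over $a$ gives an upper bound of order $|A|(|G_0|+2)$ for the number of solutions of $a+x+y=g$ with $a,x,y\in A$. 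This is where the hypothesis $|A|>120|G_0|+355$ enters: the number of representations of $g$ in the unrestricted sumset $3A$ is at most $O(|A||G_0|)$. For large $|A|$ this is small compared with $|A|^2$.

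The second step converts ``$g$ has few representations in $A+A+A$'' into structure. We would use a popular-sum version of Kneser's theorem. Let $P=\{s\in A+A: r_{A+A}(s)\ge t\}$ for a threshold $t$ of order $|G_0|+$const. Since $g-a\notin P$ for almost all $a$ (up to a small exceptional set $E$), we get
\[
(g-A\setminus E)\cap P=\emptyset, \qquad\text{so}\qquad |P|\le |G|-|A|+|E|.
\]
On the other hand, a Pollard/Kneser-type lower bound says $|P|\ge 2|A|-|H|-O(|A|^2/t)$-type losses, unless $A+A$ is contained in few cosets of a nontrivial stabilizer $H$. Because $|A|>\frac{5}{13}|G|>\frac13|G|$, the unstructured case gives $|P|+|A|>|G|$, which is a contradiction.

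The third step is the structured case. Kneser yields a subgroup $H$ with $A+A+H=A+A$ and
\[
|A+A|\ge 2|A+H|-|H|.
\]
Let $m=[G:H]$, and let $\bar A$ be the image of $A$ in $G/H$, with $k=|\bar A|$ cosets. We would argue that $g$ avoids $3^\wedge A$ essentially only if $\bar g\notin 3\bar A$. The reason is that within full cosets, restricted sums behave like ordinary sums once the cosets are large, and the $|G_0|$ bound controls the degenerate cases. Then $|3\bar A|<m$ together with $k/m>5/13$ forces, by Kneser in $G/H$ and a direct case check over small quotients, one of two configurations:
\begin{itemize}[label={}]
\item $m=2$ and $k=1$, so $A$ lies in a coset of an index two subgroup; or
\item $m=5$ and $k=2$, so $A$ lies in two cosets of an index five subgroup, with density $2/5>5/13$.
\end{itemize}
Index $3$ is excluded because a single coset has density $1/3<5/13$. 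Larger quotients are excluded because $3k-2\ge m$ whenever $k/m>5/13$ with aperiodic $\bar A$. The constant $5/13$ comes from the boundary case $m=13$, $k=5$, where $3k-2=13$ just works.

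The main obstacle is the second step. We need a robust stability statement saying that few representations of one element in $3A$ forces near-periodicity, with explicit constants like $120$ and $355$. We would first try the Pollard-type inequality
\[
\sum_{i\le t}|\{s: r(s)\ge i\}|\ge t\min(|G|,2|A|-t)
\]
in its Kneser (group) form, then carefully bound the leakage between the popular sumset and the exceptional elements.
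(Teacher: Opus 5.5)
The paper gives no proof of this statement: it is quoted from Lev \cite{lev2002} and used only in a remark. So your outline has to stand on its own, and as written it has genuine gaps.

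Your first step is correct. It actually gives $r_{A+A}(g-a)\le |G_0|+2$ for \emph{every} $a\in A$, so no exceptional set $E$ is needed.

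The second step is where it breaks. A single level set $P_t=\{s: r_{A+A}(s)\ge t\}$ admits no lower bound of the form $2|A|-f(t)$. For example, if $A\subseteq\mathbb{Z}$ is an interval $I$ together with a sufficiently sparse set $S$ placed far from $I$, then $|P_3|=2|I|-5=2|A|-2|S|-5$. Correspondingly, the bound $|P|\ge 2|A|-|H|-O(|A|^2/t)$ you write is vacuous for $t$ of order $|G_0|+3$ (for odd $|G|$ one takes $t=4$), since the loss exceeds $2|A|$. Pollard's theorem controls only $\sum_{i\le t}|P_i|$. To use it you would bound $|P_i|\le |G|-|A|$ for all $i\ge |G_0|+3$ and sum up to $t\approx(3|A|-|G|)/2$. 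This gives $t(3|A|-|G|-t)\le(|G_0|+2)|A|$, which is a contradiction in $\mathbb{Z}_p$ once $|A|$ is large. In a general group you need a Kneser-type form of Pollard's theorem, and you would still have to show that its exceptional case produces the structure your third step starts from.

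The third step has the more serious gap. Kneser's subgroup $H$ stabilizes $A+A$, not $A$, and periodicity says nothing about multiplicities. If $A$ meets some cosets of $H$ in only a few elements, then $\bar g\in 3\bar A$ gives $g\in 3A$, but neither a representation by three distinct elements nor the more than $(|G_0|+2)|A|$ representations needed to contradict your first step. Making ``$g\notin 3^{\wedge}A$ essentially only if $\bar g\notin 3\bar A$'' precise requires separating the well-filled cosets from a sparse remainder and controlling the density lost. That quantitative step is the heart of the theorem, and it is missing.

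Your account of the constant confirms this. Suppose $3\bar A\ne G/H$. Factoring out the stabilizer of $3\bar A$ and applying Kneser gives a quotient of order $m'$ in which the image of $A$ occupies $k'$ classes, with $k'/m'\ge k/m$ and $m'\ge 3k'-1$. Already $k'/m'>3/8$ forces $(m',k')\in\{(2,1),(5,2)\}$. Meanwhile $(13,5)$ is no obstruction: by Cauchy--Davenport, every $5$-element subset of $\mathbb{Z}_{13}$ has threefold sumset equal to $\mathbb{Z}_{13}$. So nothing in your outline really uses $5/13$. If the steps worked as described, they would prove the statement with $5/13$ replaced by any constant larger than $3/8$. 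The actual difficulty therefore lies in the steps you have only gestured at.
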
	
	
In 2019, Tang and Wei \cite{tang2019} obtained the following result for $4^\wedge A$ in the cyclic group $\mathbb{Z}_n$ of odd order $n$.
\begin{theorem}[{\cite[Theroem 1.3]{tang2019}}]\label{4^A odd}
Let $n \geq 11$ be an odd positive integer. For any 
\[\alpha > \alpha_{0} = \frac{3}{486} \left(27 + \sqrt{741} \right) 
+ \frac{3}{486} \left(27 - \sqrt{741} \right),\]
there exists 
\[N = N(\alpha) = \frac{54}{9 \alpha^{3} + \alpha - 1}\]
such that for all $n > N$ and $A \subseteq \mathbb{Z}_n$, if $|A| \geq \alpha n$, then we have 
\[4^\wedge A = \mathbb{Z}_n.\]
\end{theorem}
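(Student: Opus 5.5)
The plan is a counting argument by contradiction. We fix $x \in \mathbb{Z}_n$ with $x \notin 4^\wedge A$, and compare two estimates for the number of ordered quadruples of elements of $A$ summing to $x$: a lower bound showing there are many, and an upper bound showing that, because $x \notin 4^\wedge A$, every such quadruple must have a repeated entry, so there are few. The threshold $9\alpha^3+\alpha-1>0$ and the bound $n>N(\alpha)=54/(9\alpha^3+\alpha-1)$ should come out of comparing the two. I have not pinned down how the factor $9$ arises, and I flag below the one step where this is decided.

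\textbf{Upper bound from degeneracy.} For $z\in\mathbb{Z}_n$ let $r_3(z)$ be the number of ordered triples $(a,b,c)\in A^3$ with $a+b+c=z$. Since $x\notin 4^\wedge A$, whenever $d\in A$ and $(a,b,c)$ is counted by $r_3(x-d)$, two of $a,b,c,d$ coincide. I would count these degenerate quadruples directly. If two entries coincide, fixing the repeated value and one further free entry determines the last entry; since $n$ is odd, equations of the form $2u=w$ have a unique solution. There are $\binom{4}{2}=6$ choices for which pair coincides. This gives
\[
\sum_{d\in A} r_3(x-d)\le 6|A|^2 .
\]
(I expect the explicit constant $54$ in $N(\alpha)$ to come from this step together with the constant in the next step.)

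\textbf{Lower bound.} Here I would use the complement $B=\mathbb{Z}_n\setminus A$, which satisfies $|B|\le (1-\alpha)n$. The total count is $\sum_z r_3(z)=|A|^3$, so
\[
|A|^3 \le 6|A|^2 + \sum_{z\in x-B} r_3(z).
\]
The crude bound $r_3(z)\le|A|^2$ only yields the condition $\alpha>1-\alpha$, that is, the threshold $\tfrac12$. So the needed improvement is a bound of the shape
\[
\sum_{z\in x-B} r_3(z)\le \text{(small)}\cdot n^3 ,
\]
obtained using the structure of $B$ rather than its size alone.

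To get it, I would iterate the same degeneracy idea one level down. For $d\in A$, the condition $x\notin 4^\wedge A$ also says that $x-d$ avoids $3^\wedge(A\setminus\{d\})$. I would feed in the $3^\wedge$ counting that underlies Theorem~\ref{3^A odd}: there, $3\alpha^2+\alpha-1>0$ plays the same role one level down. The aim is the recursion
\[
3\alpha^2 \;\longrightarrow\; 3\cdot 3\alpha^2\cdot\alpha=9\alpha^3,
\]
which matches the pattern $3^{h-2}\alpha^{h-1}$ in the abstract. Concretely, I would show that for a positive proportion of $d\in A$, the sumset $x-d-(A\setminus\{d\})$ meets the set of pairs sums in a controlled way. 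I would then combine this with pigeonholing over the residues $x-d$ to obtain an inequality of the form
\[
9\alpha^3 n+\alpha n\le n+O(1).
\]
This contradicts $n>N(\alpha)$.

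\textbf{Main obstacle.} The hard step is the third one: replacing the trivial bound on $r_3$ over $x-B$ by the sharper inductive estimate that produces the coefficient $9$ (in general $3^{h-2}$). If the direct reduction to the $3^\wedge$ argument does not give this coefficient, I would fall back on a weighted version. There I would count unordered $3$-subsets, dividing by $3!$, and use the odd-order symmetry $z\mapsto x-z$ to pair up terms. I expect the constant $9$ to emerge from that symmetry combined with the factor $3$ inherited from the $h=3$ case.
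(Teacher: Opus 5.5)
Your first step is correct. If $x\notin 4^\wedge A$, every ordered quadruple in $A^4$ with sum $x$ has a repeated entry, so $R_1(x)=\sum_{d\in A}r_3(x-d)\le 6k^2$ with $k=|A|$, and $6=\binom{4}{2}$ is indeed one factor of $54$. This plays the role of the paper's Newton-identity step (for $h=4$, $R=R_1-6R_2+3R_3+8R_{i_0}-6R_5$). Your direct union bound is simpler and is already enough for $N(\alpha)$.

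The gap is the lower bound for $R_1(x)$, which is the whole content of the theorem. The route you sketch, reducing to the $3^\wedge$ count, does not produce the coefficient $9$. Suppose you feed in the $3^\wedge$ estimate pointwise, $r_3(z)\ge \frac{k^3}{n}-\frac{n}{3}\bigl(k-\frac{k^2}{n}\bigr)$ for every $z$, and sum over $d\in A$. You get
\[
R_1(x)\ \ge\ \frac{k^4}{n}-\frac{nk}{3}\Bigl(k-\frac{k^2}{n}\Bigr).
\]
Comparing with $6k^2$ gives only $n(3\alpha^2+\alpha-1)\le 18$. That is the $h=3$ threshold $(\sqrt{13}-1)/6\approx 0.434$, not $\alpha_0\approx 0.4046$ (the real root of $9x^3+x-1$; the displayed formula for $\alpha_0$ has lost its cube roots). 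The error term is too big by the factor $3k/n>1$, because treating one level at a time throws away cancellation. Neither pigeonholing over the residues $x-d$ nor the symmetry $z\mapsto x-z$ supplies that cancellation. The ``recursion'' $3\alpha^2\to 9\alpha^3$ is a pattern match, not an argument.

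The missing idea is to Fourier-expand the full quadruple count at once, as the paper does: $R_1(x)=\frac{1}{n}\sum_{g\in G}S_A(g)^4\chi_{-x}(g)$. By Parseval (Lemma~\ref{cha-identity 2}),
\begin{align*}
R_1(x)&\ \ge\ \frac{k^4}{n}-\Bigl(\max_{g\neq 0}|S_A(g)|\Bigr)^{2}\cdot\frac{1}{n}\sum_{g\neq 0}|S_A(g)|^2\\
&=\frac{k^4}{n}-\Bigl(\max_{g\neq 0}|S_A(g)|\Bigr)^{2}\Bigl(k-\frac{k^2}{n}\Bigr).
\end{align*}
The odd-order input is the uniform bound $|S_A(g)|\le n/3$ for $g\neq 0$ (Lemma~\ref{max ablian}). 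To get it, split $A$ along the $d$ cosets of $\ker\chi_g$, where $d\ge 3$ is odd, apply Lemma~\ref{max sum} with $Y=n/d$, and use $\sin(\pi/2d)\ge 3/(2d)$. This gives $R_1(x)\ge \frac{k}{9n}\bigl(9k^3-n^3+n^2k\bigr)$. So the $9$ is $(n/3)^{h-2}$ at $h=4$, not a factor inherited from the $h=3$ case.

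Combined with your $R_1(x)\le 6k^2$ and $k\ge\alpha n$, this yields $n(9\alpha^3+\alpha-1)\le 54\alpha\le 54$, contradicting $n>N(\alpha)$. Without this fourth-moment estimate, or an equivalent bound on $\sum_{z\in x-B}r_3(z)$, your argument does not reach the stated threshold.
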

	
In this paper, we generalize Theorem \ref{3^A odd} and Theorem \ref{4^A odd} for the restricted sumset $h^{\wedge}A$ for $h \geq 4$ in arbitrary finite abelian group $G$ of odd order $n$. More precisely, we prove the following theorem.
	
\begin{theorem}\label{h^A}
Let $h \geq 4$ be an integer, and let $p(h)$ denote the number of partitions of $h$. Let $\alpha_h $ be the unique positive root of the polynomial $3^{h - 2} x^{h - 1} + x - 1$. Then for any $\alpha > \alpha_h $ there exists 
\[M_h(\alpha) = \max \left\{\frac{3^{h - 2} (h^2 - h)}{2 \left(3^{h - 2} \alpha^{h - 1} + \alpha - 1 \right)}, \frac{12 (p(h) - 4)) (h - 4)! + (3 h - 7) (h - 4)}{6 \alpha}\right\}\]
such that for all $n > M_h(\alpha)$ with $n$ odd, if $A$ is a subset of a finite abelian group $G$ of order $n$ and if $|A| \geq \alpha n$, then
\[h^{\wedge} A = G.\]

Moreover, the roots $\alpha_h$ satisfy the following properties: 
\begin{enumerate}
  \item [(a)] $\alpha_h \in (\frac{1}{3},\frac{1}{2})$ and $\alpha_{h} > \alpha_{h + 1}$ for every $h \geq 4$,
  \item [(b)] $\lim\limits_{h \to \infty}\alpha_h = \frac{1}{3}$.
\end{enumerate}
\end{theorem}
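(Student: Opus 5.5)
We plan to prove the covering statement by a counting argument modelled on Gallardo et al.\ (Theorem~\ref{3^A odd}) and Tang--Wei (Theorem~\ref{4^A odd}). Fix $g \in G$, put $k = |A| \geq \alpha n$, and count ordered $h$-tuples $(a_1,\dots,a_h) \in A^h$ with $a_1 + \cdots + a_h = g$. The aim is to show that there are more such tuples than those having a repeated coordinate. Then some tuple has pairwise distinct entries, and so $g \in h^{\wedge}A$.

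\textbf{Lower bound on all solutions.} Since $n$ is odd, every prime divisor of $n$ is at least $3$. The standard Fourier/Kneser-type estimate then gives that $A$ meets many additive structures. Concretely, we will show by induction on $h$ that the number $N_h(g)$ of solutions in $A^h$ satisfies
\[ N_h(g) \geq 3^{h-2}\, k^{h-1}/n^{h-2}\cdot(\text{something}) - (\text{error}), \]
in the form $N_h(g) \ge \tfrac{1}{?}\big(3^{h-2}k^{h-1}+kn^{h-2}-n^{h-1}\big)/n^{h-2}$.

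The base step should follow the Gallardo et al.\ computation for $h = 3$. There, writing $r(x) = |A \cap (x - A)|$ and using $|A| + |g - x - A| > n$-type pigeonholing together with the fact that a set of density above $1/3$ has $|A+A| \ge \min(n, 3|A|-\dots)$ (Kneser: the stabilizer $H$ has index $\ge 3$ when $n$ is odd), one obtains $\sum_x r(x) \ge 3k^2 - \dots$. Iterating (conditioning on $a_h$ and summing over it) multiplies by roughly $3k/n$ at each step. Altogether this yields
\[ N_h(g) \ge \tfrac{1}{?}\,n^{-(h-2)}\big(3^{h-2}k^{h-1} + k n^{h-2} - n^{h-1}\big) \ge \tfrac{1}{?}\, n\,\big(3^{h-2}\alpha^{h-1} + \alpha - 1\big). \]
The polynomial $3^{h-2}x^{h-1}+x-1$ appearing in the statement is exactly what makes this positive when $\alpha > \alpha_h$.

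\textbf{Upper bound on degenerate solutions.} Tuples with $a_i = a_j$ for some $i<j$ number at most $\binom{h}{2}$ times the number of solutions of $2a + a_3 + \cdots + a_h = g$. The map $a \mapsto 2a$ is injective since $n$ is odd, so each such equation has at most $k^{h-2}$ solutions, and after normalisation this contributes a term of order $\binom h2 = (h^2-h)/2$ relative to $n$. Requiring the main term to dominate gives
\[ n > \frac{3^{h-2}(h^2-h)}{2(3^{h-2}\alpha^{h-1}+\alpha-1)}, \]
which is the first entry of $M_h(\alpha)$.

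The second entry, involving $p(h)$, suggests a finer count of coincidence patterns indexed by partitions of $h$. This count is needed for the small-order regime, where a crude union bound fails. We will handle it by classifying degenerate tuples by the partition of $\{1,\dots,h\}$ induced by equal entries. The main obstacle is making the iterated lower bound precise: each step must gain the factor $3$ uniformly, even when $A$ has a large period. We expect to treat the periodic case separately, by passing to $G/H$ with $|G/H| \ge 3$ odd.

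\textbf{Properties of $\alpha_h$.} For (a), set $f_h(x) = 3^{h-2}x^{h-1} + x - 1$. It is increasing on $(0,\infty)$, so its positive root is unique. We have $f_h(1/3) = 1/3 + 1/3 - 1 < 0$ and $f_h(1/2) = 3^{h-2}/2^{h-1} - 1/2 > 0$ for $h \ge 4$, so $\alpha_h \in (1/3, 1/2)$. For monotonicity, note $f_{h+1}(x) - f_h(x) = 3^{h-2}x^{h-1}(3x - 1) > 0$ for $x > 1/3$. Hence $f_{h+1}(\alpha_h) > 0$, which gives $\alpha_{h+1} < \alpha_h$.

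For (b), the sequence $\alpha_h$ is decreasing and bounded below by $1/3$, so it has a limit $L \ge 1/3$. If $L > 1/3$, then $3^{h-2}\alpha_h^{h-1} \ge (3L)^{h-1}/3 \to \infty$, contradicting $3^{h-2}\alpha_h^{h-1} = 1 - \alpha_h < 1$. Therefore $L = 1/3$.
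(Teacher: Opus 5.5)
\textbf{Gap: the lower bound for the total count $N_h(g)$ is missing.} This bound is the heart of the theorem, and your sketch does not supply it.

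Your displayed target has undetermined constants and the wrong order of magnitude. After substituting $k=\alpha n$ it is of order $n$. The degenerate tuples you subtract number up to $\binom{h}{2}k^{h-2}\asymp n^{h-2}$, so for $h\ge 4$ it could never dominate.

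The base step also does not work as stated. $\sum_x |A\cap(x-A)|=k^2$ identically, so it cannot be bounded below by $3k^2-\cdots$. Kneser's theorem gives $|A+A|\ge 2|A|-|H|$, not $3|A|-\cdots$, and in any case it controls the size of a sumset, not the number of representations.

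More fundamentally, an induction $N_h(g)=\sum_{a\in A}N_{h-1}(g-a)\ge k\min_x N_{h-1}(x)$ multiplies the main term and the error by the same factor $k$. It therefore only carries the threshold from level $h-1$ to level $h$. Starting from the $h=3$ bound of Gallardo et al.\ you would get $\alpha>(\sqrt{13}-1)/6\approx 0.434$ for every $h$, never the decreasing thresholds $\alpha_h$.

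\textbf{The missing idea: apply the Fourier bound directly to the $h$-fold count.} Use the $(h-2)$-nd power of the largest nontrivial coefficient:
\[
N_h(g)=\frac{1}{n}\sum_{x\in G}S_A(x)^h\chi_{-g}(x)\ \ge\ \frac{k^h}{n}-\Bigl(\max_{x\neq 0}|S_A(x)|\Bigr)^{h-2}\cdot\frac{1}{n}\sum_{x\neq 0}|S_A(x)|^2 .
\]
Parseval (Lemma~\ref{cha-identity 2}) gives $k-k^2/n$ for the last factor. The key input is $|S_A(x)|\le n/3$ for $x\neq 0$. The character has odd order $d\ge 3$, and $A$ meets each of the $d$ cosets of its kernel in at most $n/d$ points. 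Lemma~\ref{max sum} together with $\sin(\pi/2d)\ge 3/(2d)$ then bounds the sum by $n/3$.

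This yields $N_h(g)\ge \frac{k}{3^{h-2}n}\bigl(3^{h-2}k^{h-1}+kn^{h-2}-n^{h-1}\bigr)$, which is of order $n^{h-1}$. The factor $(1/3)^{h-2}$ is exactly where $3^{h-2}x^{h-1}+x-1$ comes from, and no periodic or quotient case is needed.

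\textbf{The rest of your plan then works, and is simpler than the paper's.} Your union bound, that at most $\binom{h}{2}k^{h-2}$ solutions have a repeated entry, is correct. It already closes the argument once $n$ exceeds the first entry of $M_h(\alpha)$, provided you compare with $\binom{h}{2}k^{h-2}$ directly. Writing $k=tn$, the function $t^{h-3}/f_h(t)$ is decreasing on $[\alpha,1]$, which gives the threshold. Comparing instead with $\binom{h}{2}n^{h-2}$ would cost an extra factor $1/\alpha$.

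The paper expands the distinct-entry count exactly via Newton's identity (Lemma~\ref{newton}) over all partitions of $h$. The $p(h)$-entry of $M_h(\alpha)$ is an artifact of bounding those extra terms, not something a small-order regime requires.

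Your arguments for (a) and (b) are correct.
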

	
We give numerical values of $\alpha_h$ and $M_h(\alpha)$ corresponding to some values of $h$ and $\alpha > \alpha_{h}$ in the following table:   

\begin{table}[h]
\centering
\begin{tabular}{|c|c|c|c|}
\hline
$h$  & $\alpha_h$ (upto $3$ decimal places)&   $\alpha$           & $M_h(\alpha)$        \\ \hline
$4$  & $0.404$   &  $0.405$             & $18807.96$           \\ \hline
$5$  & $0.388$   &  $0.389$             & $37255.68$           \\ \hline
$6$  & $0.377$   &  $0.378$             & $392935.41$          \\ \hline
$7$  & $0.370$   &  $0.371$             & $1097319.46$         \\ \hline
$8$  & $0.365$   &  $0.366$             & $2777127.76$         \\ \hline
$9$  & $0.361$   &  $0.362$             & $11349436.56$        \\ \hline
$10$ & $0.358$   &  $0.359$             & $33321849.20$        \\ \hline
$11$ & $0.356$   &  $0.357$             & $57366134.14$        \\ \hline
\end{tabular}\\[20pt]
\caption{Numerical values of  $\alpha_h$ and $M_h(\alpha)$ for some values of $h$ and $\alpha > \alpha_h$.}
\end{table}

\begin{remark}
 Theorem \ref{lev} implies that for all $n \geq 1235$ and $n$ odd, if $|A| > 0.4n$ then $3^\wedge A = G$. This result further implies that if $|A| > 0.4n + 1$ then $4^\wedge A = G$, for all $n \geq 1235$ and $n$ odd. For  $n \geq 1235$, we have  $0.4n + 1 <  0.404n$. Therefore, for $n \geq 1235$, Theorem \ref{lev} implies the $h = 4$ case of our Theorem \ref{h^A}. 
\end{remark}

It is worth mentioning a result for the restricted $h$-fold sumset $h^{\wedge}A$ which has appeared recently \cite[Theorem B]{b chen} while we were preparing the final draft of this paper. We briefly discuss this result here which was proved in connection with the $h$-critical number of a finite abelian group $G$. For a positive integer $h$, the \emph{restricted $h$-critical number} $\chi^{\wedge}(G, h)$ of the finite abelian group $G$ is defined as
\[\chi^{\wedge}(G, h) = \min\{m \in \mathbb{N}: h^{\wedge}A = G~\text{whenever}~ A \subseteq G~ \text{with}~ |A| \geq m\}.\]
For the groups of even order, the precise values $\chi^{\wedge}(G, h)$ was determined by Roth and Lempel \cite{RothLempel1992} in 1992 (see also \cite{bajnok2018} for detailed discussion on crticial numbers). Very recently, Chen and Huang \cite[Theorem B]{b chen} has proved the following theorem in case of finite abelian groups of odd order.
\begin{theorem}[{\cite[Theorem B]{b chen}}] \label{chen-huang-thm}
  Let $G$ be a finite abelian group of odd order $n$, let $p(G)$ be the smallest prime divisor $n$. Let $h$ be an integer such that $3 \leq h \leq \dfrac{n}{p(G)} - 2$. Let
\[
c(n) =
\begin{cases}
\frac{2}{5} & \text{if } 5 \mid n, \\
\frac{5}{13} & \text{if } 5 \nmid n.
\end{cases}
\]

Then the $h$-critical number satisfies the upper bound
\[
\chi^{\wedge}(G, h) \leq
\begin{cases}
\lfloor c(n) n \rfloor + 9 
& \text{if } p(G) = 3 \text{ and } n \geq 3 \cdot 46319, \\

\lfloor c(n) n \rfloor + 21 
& \text{if } p(G) = 5 \text{ and } n \geq 5 \cdot 1235, \\

\lfloor c(n) n \rfloor + 3 
& \text{if } p(G) \geq 7 \text{ and } n \geq 1235.
\end{cases}
\]
\end{theorem}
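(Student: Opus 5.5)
The plan is to prove the bound first for $h=3$ directly from Theorem~\ref{lev}, and then to pass from $h$ to $h+1$ while increasing the threshold $|A|$ only by an absolute constant. The complementation identity $h^{\wedge}A=\sigma(A)-(|A|-h)^{\wedge}A$, where $\sigma(A)=\sum_{a\in A}a$, will be used to handle values of $h$ close to $|A|$.

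\emph{Base case $h=3$.} Since $n$ is odd, $G_0=\{0\}$, so Theorem~\ref{lev} applies as soon as $|A|>\max\{\tfrac{5}{13}n,475\}$. For $n\ge 1235$ we have $\tfrac{5}{13}n\ge 475$, so the size condition reduces to $|A|>\tfrac{5}{13}n$. There is no subgroup of index two, so the first exceptional case of Theorem~\ref{lev} cannot occur. The second exceptional case needs a subgroup of index five, hence $5\mid n$, and then $|A|\le \tfrac{2}{5}n$. Consequently $|A|\ge\lfloor c(n)n\rfloor+1$ forces $3^{\wedge}A=G$, which gives $\chi^{\wedge}(G,3)\le\lfloor c(n)n\rfloor+1$ in all three regimes of Theorem~\ref{chen-huang-thm}.

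\emph{Inductive step.} The naive step is to write $g=a+(g-a)$ with $g-a\in h^{\wedge}(A\setminus\{a\})$. It costs one element per step, so the threshold would grow linearly in $h$; this is exactly what must be avoided. Instead I would fix $A$ with $|A|\ge m:=\lfloor c(n)n\rfloor+C_p$, where $C_p\in\{9,21,3\}$, and an arbitrary $g\in G$, and argue as follows.
\begin{enumerate}
\item By induction, $g-a\in h^{\wedge}A$ for every $a\in A$.
\item If $g\notin(h+1)^{\wedge}A$, then for every $a\in A$ each representation of $g-a$ must use $a$. This gives $g-2a\in(h-1)^{\wedge}(A\setminus\{a\})$ for all $a\in A$, and moreover $g-2a$ has no representation in $(h-1)^{\wedge}$ avoiding $a$.
\item Iterating this rigidity shows that the set $g-2A$ (note that $a\mapsto 2a$ is injective because $n$ is odd) avoids $(h-1)^{\wedge}(A\setminus X)$ for suitable small sets $X\subseteq A$.
\item Applying the already established $3$-fold result to $A\setminus X$ with $|X|=h-3$, which still has size above $\lfloor c(n)n\rfloor+1$ once $C_p$ is chosen large enough, gives a contradiction.
\end{enumerate}

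\emph{Range of $h$.} For large $h$ I would use the complementation identity. Writing $k=|A|$, if $k-h\ge 3$ is small it suffices to show $(k-h)^{\wedge}A=G$, so I may assume $h\le k/2$. The hypothesis $h\le n/p(G)-2$ enters at this point. If $A$ is, up to a few elements, a union of cosets of a subgroup $H$ with $|H|=n/p(G)$, then sums of too many distinct elements inside one coset are restricted. Taking $h\le |H|-2$ guarantees at least two spare elements in each coset, and these allow a representation to be rearranged across cosets.

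\emph{Main obstacle.} The hardest part is making step 2 yield a contradiction with only a bounded loss in $|A|$ when $p(G)$ is small. For $p(G)=3$ (respectively $5$), $A$ may be nearly a union of cosets of an index-$3$ (respectively index-$5$) subgroup, and near such configurations the rigidity argument must be replaced by a case analysis.
\begin{itemize}
\item I would project $A$ to $G/H$ and apply Cauchy--Davenport in $\mathbb{Z}_p$ to the restricted sums of the coset indices.
\item Inside each coset I would apply the induction hypothesis to $A\cap(x+H)$, which is dense in $x+H$.
\end{itemize}
The additive constants $9$ and $21$ should come from counting the elements that can be discarded in this coset analysis. For $p(G)\ge 7$ no such structure fits below density $\tfrac{2}{5}$, so only the constant $3$ survives. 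I expect this structured case, and checking that the losses in it do not accumulate with $h$, to be the main difficulty.
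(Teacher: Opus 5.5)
The paper does not prove this statement. It is quoted from Chen and Huang without proof, so there is no in-paper argument to compare with, and your proposal has to stand on its own. Your base case is correct: for odd $n\ge 1235$ one has $G_0=\{0\}$ and $\frac{5}{13}n\ge 475$. The index-two alternative in Theorem~\ref{lev} cannot occur, and the index-five alternative forces $5\mid n$ and $|A|\le\frac{2}{5}n$. Hence $\chi^{\wedge}(G,3)\le\lfloor c(n)n\rfloor+1$. The gap is the passage to general $h$, which is where the whole content of the theorem lies.

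\textbf{The inductive step does not achieve a bounded loss.}
\begin{itemize}
\item If $|A|\ge m+1$, the induction hypothesis applied to $A\setminus\{a\}$ already gives $g\in(h+1)^{\wedge}A$. So the rigidity in step~2 carries information only in the boundary case $|A|=m$. There $A\setminus\{a\}$ falls below the threshold, and nothing you have forces a contradiction.
\item Step~2 is also garbled as written. The statement $g-2a\in(h-1)^{\wedge}(A\setminus\{a\})$ \emph{is} a representation avoiding $a$; the correct statement is $g-a\notin h^{\wedge}(A\setminus\{a\})$.
\item Unwinding the correct rigidity yields only $g-\sigma(Y)\notin 3^{\wedge}(A\setminus Y)$ for every set $Y$ of $h-2$ elements of $A$. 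To contradict this with the $3$-fold result, you must delete from $A$ a set whose size grows with $h$. Your $|X|=h-3$ is not ``small''.
\item Keeping $|A\setminus X|\ge\lfloor c(n)n\rfloor+1$ then requires $C_p\ge h-2$. This fails for fixed $C_p\in\{9,21,3\}$ once $h$ exceeds a constant, whereas the theorem allows $h$ up to $n/p(G)-2$.
\end{itemize}
So steps 3--4 are the naive one-element-per-step argument in disguise. What you actually obtain is only $\chi^{\wedge}(G,h)\le\lfloor c(n)n\rfloor+h-2$.

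\textbf{What is missing.} You need an ingredient that keeps the threshold fixed as $h$ grows. One example would be a structural description of sets slightly below the threshold whose restricted $3$-fold sumset misses an element. With it, among the many choices of $Y$ you could find one for which $A\setminus Y$ avoids the bad structure. Theorem~\ref{lev} says nothing below $\frac{5}{13}n$, so it cannot supply this by itself.

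Your last paragraph names a plausible direction: projecting to $G/H$, Cauchy--Davenport on coset indices, induction inside cosets, and spare elements from $h\le n/p(G)-2$. But it only describes the difficulty. Nothing shows that the number of discarded elements stays bounded independently of $h$. The claim that for $p(G)\ge 7$ no obstructing structure exists below density $\frac{2}{5}$ is asserted without proof. The use of $h\le n/p(G)-2$ is never turned into an actual rearrangement argument.
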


The above theorem gives an upper bound on $\chi^{\wedge}(G, h)$ which is close to $\frac{2}{5}n$ or $\frac{5}{13}n$ depending on whether $n$ is divisible by $5$. But Theorem \ref{h^A} gives more precise information about $\chi^{\wedge}(G, h)$ and implies an improved upper bound: $\chi^{\wedge}(G, h) \leq \alpha n$ for all $\alpha > \alpha_h$ under some conditions on the order of the group $n$, where $\alpha_h$ strictly decreases as $h$ increases, and approaches the limit $\frac{1}{3}$. In fact, for $h \geq 6$, the value of $\alpha_h$ becomes even less than $\frac{5}{13}$. We remark that if a set $A$ is contained in a coset of a subgroup of $G$ of index three, then $|A| \leq \frac{n}{3}$ but $h^{\wedge} A \neq G$. Thus the constant $\frac{1}{3}$ is optimal.

The organization of the paper is as follows. We fix some general notations in the next section. In Section \ref{sec-basic-concepts}, we discuss some basic concepts from group algebra and character theory of finite abelian group which will be required for the proofs. In Section \ref{sec2}, we prove auxiliary lemmas which will be reqiured for the proof of the main theorem. Finally, in Section \ref{proof}, we prove Theorem \ref{h^A}. 

\section{Notation} \label{sec-notation}
A partition of a positive integer $h$ is a $h$-tuple $(\lambda_1, \lambda_2, \dots, \lambda_h)$ such that $0 \leq \lambda_1 \leq \lambda_2 \leq \cdots \leq \lambda_h$ and $\lambda_1 + \lambda_2 +\cdots + \lambda_h = h$. Let $\mathbb{P}(h)$ denote set of all partitions of $h$, and let $p(h)$ be the number of partitions of $h$. Then
\[\mathbb{P}(h) = \{(\lambda_1, \lambda_2, \dots, \lambda_h): 0\leq \lambda_1 \leq \lambda_2 \leq \cdots \leq \lambda_h \ \text{and} \ \lambda_1 + \lambda_2 +\cdots + \lambda_h = h\}.\]
The set $\mathbb{P}(h)$ is equipped with colexicographic order ``$\prec$" defined as follows: 
\[(d_1, d_2, \dots, d_h)  \prec (e_1, e_2, \dots, e_h)\]
if and only if there exist $k \in \{1, 2, \dots, h\}$ such that $d_i = e_i$ for all $i > k$ and $d_k < e_k$. Then $|\mathbb{P}(h)| = p(h)$. Let $\boldsymbol{\lambda_i}$ denote the $i^{\mathrm{th}}$ element of $\mathbb{P}(h)$ in the colexicographic order. Then
 \[\boldsymbol{\lambda_1} = (1, 1, \dots, 1) \prec  \boldsymbol{\lambda_2}  = (0, 1, \dots, 1, 2 ) \prec \cdots \prec   \boldsymbol{\lambda_{p(h)}} = (0, 0, \dots, 0, h).\]
For $r \in \{1,2 \dots, h\}$ and $\boldsymbol{\lambda} \in \mathbb{P}(h)$, let $\mu_{\boldsymbol{\lambda}}(r)$ denotes the multiplicity of $r$ in $\boldsymbol{\lambda}$. If  $\boldsymbol{\lambda} = \boldsymbol{\lambda_i}$, then we simply write $\mu_{i}(r)$ in place of $\mu_{\boldsymbol{\lambda}}(r)$. Let  $\boldsymbol{\lambda_i} =  (\lambda_{i1}, \lambda_{i2}, \dots, \lambda_{ih})$. For $h\geq 4$, let $\boldsymbol{\lambda_{i_0}} = (0, 0, 1, \dots, 1, 3)$.
	
Let $G$ be a finite abelian group, and let  $A = \{ a_1, a_2,\dots,a_k\} \subseteq G$. Let 
\[A^h = \{(a_{j_1},a_{j_2}, \dots, a_{j_h}): a_{j_r} \in A \ \text{for}\ r = 1, 2, \dots, h\}.\]
For $m \in G$, we define 
\begin{equation}\label{R(m)}
R(m) = \left|\{(a_{j_1}, a_{j_2}, \dots, a_{j_h}) \in A^h : m = a_{j_1} + a_{j_2} + \cdots + a_{j_{h}}, a_{j_r} \neq a_{j_s} \ \text{for}\ j_r \neq j_s\}\right|
\end{equation}
Corresponding to each $\boldsymbol{\lambda_i} \in \mathbb{P}(h)$, we define  
\begin{align*}
R_i(m) = |\{(\underbrace{a_{j_1}, \ldots, a_{j_1}}_{\lambda_{i1} \ \text{times}},  
\ldots, 
\underbrace{a_{j_h}, \ldots, a_{j_h}}_{\lambda_{ih} \ \text{times}}) \in A^h: 
m = \lambda_{i1}a_{j_{1}} + \cdots + \lambda_{ih}a_{j_{h}}\}|. 
\end{align*}
Let $i_{1} = \min\{j: 1\leq j \leq h \ \text{and} \ \lambda_{ij} > 0\}$. Then $R_i(m)$ can be expressed as
\begin{equation}\label{R_i(m)}
	R_i(m) = |\{(\underbrace{a_{j_{i_1}}, \ldots, a_{j_{i_1}}}_{\lambda_{ii_1} \ \text{times}},  
	\ldots, 
	\underbrace{a_{j_h}, \ldots, a_{j_h}}_{\lambda_{ih} \ \text{times}}) \in A^h: 
	m= \lambda_{ii_1} a_{j_{i_1}} + \cdots + \lambda_{ih} a_{j_{h}}\}|
\end{equation}
for $i = 1, \ldots, p(h)$.

For any real number $v$, we denote 
\[e(v) = \exp(2 \pi i v).\] 

\section{Basic Concepts from Group Algebra and Character Theory}\label{sec-basic-concepts}
In this section, we follow the definitions and notations from \cite{grynkiewicz2013}.
\subsection{Group Algebra}
Let $\mathbb{C}$ be the field of complex numbers, and let $G$ be the finite abelian group. The group algebra $\mathbb{C}[G]$ is the all polynomials in the variable $x$ with coefficients from $\mathbb{C}$ and exponents from $G$. More precisely, $\mathbb{C}[G]$ consists of all formal expression of the form $f = \sum_{g \in G} c_g x^g$ with $c_g \in \mathbb{C}$, and multiplication and addition obeying the following rules:
\begin{enumerate}
 \item \[(ax^g)(bx^h) = abx^{g + h} ~\text{for}~ a, b \in \mathbb{C},\]
  \item \[\sum_{g \in G} a_g x^g + \sum_{g \in G} b_g x^g = \sum_{g \in G} (a_g + b_g) x^g,\]
  \item \[c\left(\sum_{g \in G} a_g x^g \right) = \sum_{g \in G} c a_g x^g,\]
  \item \[\left(\sum_{g \in G} a_g x^g\right) \left(\sum_{g \in G} b_g x^g\right) = \sum_{g \in G}\left(\sum_{h \in G} a_h b_{g - h}\right) x^g.\] 
\end{enumerate}

\subsection{Character Theory}	
Let $G$ be a finite abelian group, written additively. Let $\widehat{G} := \mathrm{Hom}(G, \mathbb{C^{\times}})$ be the group of characteres of the finite abelian group $G$, where the group operation in $\widehat{G}$ is written multiplicatively. Then $G \cong \widehat{G}$. We write $g \mapsto \chi_g$ to denote an isomorphism from $G$ onto $\widehat{G}$. Then $\widehat{G} = \{ \chi_g : g \in G \}$. This isomorphism depends on the choice of basis for $G$. Given a subset $A$ of $G$, we fix a basis $\{e_1, \dots, e_s\}$ of $G$, so that each element $a \in G$ can be expressed uniquely as $a = \sum_{j = 1}^{s} y_je_j$, with $y_j \in [0, \mathrm{ord}(e_j) - 1]$, where $\mathrm{ord}(e_j)$ denote the order of $e_j$. We use this basis to define the function $S_A : G \to \mathbb{C}$ by
\[S_A = \sum_{g \in A} \chi_g,\]
where the $\chi_g \in \widehat{G}$. If
\[g = \sum_{j = 1}^{s} r_j e_j \in G\]
and if $u_j = \mathrm{ord}(e_j)$, then $\chi_g$ is defined by
\begin{equation*}
	\chi_g \left(\sum_{j = 1}^{s} y_j e_j \right) = \prod_{j = 1}^{s} e\left(\frac{r_j y_j}{u_j} \right) = e \left(\frac{1}{u} \sum_{j = 1}^{s} r_j y_j \frac{u}{u_j}\right),
\end{equation*}
where $y_j, r_j \in \mathbb{Z}$ and $u = \mathrm{exp}(G)$, the exponent of the group $G$. The following identities will be useful:
\begin{itemize}
	\item $\chi_g(a) \chi_h(a) = \chi_{g + h}(a)$,
	
	\item $\chi_g(a + b) = \chi_g(a) \chi_g(b)$,
	
	\item $\chi_g(h) = \chi_h(g)$,
	
	\item  The image of $G$ under $\chi_g$ is a cyclic group of the $d$-th roots of unity, where $d$ is order of $g$.
\end{itemize}

\section{Auxiliary Lemmas} \label{sec2}
Let $x_1, \dots, x_k $ be variables. For $t \geq 1$, let 
\begin{equation}\label{power-sum-poly}
 p_t:= p_t(x_1,\dots, x_k) = x_1^t + \cdots + x_k^t.
\end{equation} 
For $h \geq 0$, the \emph{elementary symmetric polynomial} $e_h(x_1, \dots, x_k)$ is defined as
\begin{equation}\label{sym-poly}
e_h:= e_h(x_1, \dots, x_k) =
\begin{cases}
  1, & \mbox{if } h = 0; \\
  \sum\limits_{1 \leq j_1 < \cdots < j_h \leq k} x_{j_1}\cdots x_{j_h} , & \mbox{if } 1 \leq h \leq k; \\
  0, & \mbox{if } h > k.
\end{cases}
\end{equation}

The following lemma for symmetric polynomials is crucial for the proof of main theorem.

\begin{lemma}[{\cite[p. 6-7]{macmahon}}]\label{newton} 
For integers $t \geq 1$ and $h \geq 0$, let $p_t$ and $e_h$ be defined as in \eqref{power-sum-poly} and \eqref{sym-poly}. Then 
	\begin{equation*}
		e_h = (-1)^h \sum_{\substack{m_1 + 2 m_2 + \cdots + h m_h = h \\  m_1 \geq 0, \dots, m_h \geq 0} } \prod_{r = 1}^{h} \frac{(-p_r)^{m_r}}{m_r!r^{m_r}}.
	\end{equation*}
\end{lemma}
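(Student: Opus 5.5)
We will prove the identity by comparing coefficients in a generating function. Work in the formal power series ring $\mathbb{Q}[x_1, \dots, x_k][[t]]$ and set
\[E(t) = \prod_{j = 1}^{k} (1 + x_j t) = \sum_{h \geq 0} e_h t^h .\]
Here $e_h$ is exactly as in \eqref{sym-poly}. The conventions $e_0 = 1$ and $e_h = 0$ for $h > k$ are automatic, since $E(t)$ is a polynomial in $t$ of degree $k$ with constant term $1$. So it suffices to show that the coefficient of $t^h$ in $E(t)$ equals the right-hand side of Lemma \ref{newton}.

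First, take the formal logarithm. This is legitimate because each factor $1 + x_j t$ has constant term $1$, and then $\log$ and $\exp$ are mutually inverse and turn products into sums. Expanding $\log(1 + x_j t) = \sum_{r \geq 1} (-1)^{r-1} x_j^r t^r / r$ and summing over $j$ gives
\[\log E(t) = \sum_{r \geq 1} \frac{(-1)^{r-1} p_r}{r}\, t^r .\]
Exponentiating and splitting the exponential of the sum into a product, we get
\[E(t) = \prod_{r \geq 1} \exp\!\Big(\frac{(-1)^{r-1} p_r t^r}{r}\Big) = \prod_{r \geq 1} \sum_{m_r \geq 0} \frac{\big((-1)^{r-1} p_r\big)^{m_r}}{m_r!\, r^{m_r}}\, t^{r m_r}.\]

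Next, extract the coefficient of $t^h$. Only the factors with $r \leq h$ contribute, and each contributing term corresponds to a tuple $(m_1, \dots, m_h)$ of nonnegative integers with $m_1 + 2m_2 + \cdots + h m_h = h$. Hence
\[e_h = \sum_{\substack{m_1 + 2 m_2 + \cdots + h m_h = h \\ m_1, \dots, m_h \geq 0}} \prod_{r = 1}^{h} \frac{\big((-1)^{r-1} p_r\big)^{m_r}}{m_r!\, r^{m_r}} .\]

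Finally, reconcile the signs. In each term the total sign is
\[\prod_{r} (-1)^{(r-1) m_r} = (-1)^{\sum_r r m_r - \sum_r m_r} = (-1)^h \prod_r (-1)^{m_r}.\]
Therefore each summand equals $(-1)^h \prod_{r} (-p_r)^{m_r} / (m_r!\, r^{m_r})$, which is the formula in Lemma \ref{newton}.

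The only delicate point is justifying the formal $\log$/$\exp$ manipulations. This is standard over $\mathbb{Q}$ for series with constant term $1$. Alternatively, one can avoid logarithms: differentiate $E(t)$ to get $E'(t)/E(t) = \sum_{r \geq 1} (-1)^{r-1} p_r t^{r-1}$, which yields the recursive Newton identities $h e_h = \sum_{r=1}^{h} (-1)^{r-1} p_r e_{h-r}$, and then verify the closed formula by induction on $h$.
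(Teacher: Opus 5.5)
Your argument is correct and complete. The logarithm of $\prod_j(1+x_jt)$ is $\sum_{r\ge1}(-1)^{r-1}p_rt^r/r$. Exponentiating factor by factor and reading off the coefficient of $t^h$ gives the sum over $(m_1,\dots,m_h)$ with $\sum_r rm_r=h$. Your sign bookkeeping $\prod_r(-1)^{(r-1)m_r}=(-1)^h\prod_r(-1)^{m_r}$ is right, and the cases $h=0$ and $h>k$ come out automatically.

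The paper gives no proof of this lemma; it simply quotes the identity from MacMahon. The only difference in route is therefore that you make the result self-contained, using the classical exponential/logarithmic generating-function derivation. The Newton-identity alternative you sketch at the end would also work, but the induction is not carried out as written, so rely on the main argument.

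Beyond self-containment, your version makes clear where the identity lives. You prove it in $\mathbb{Q}[x_1,\dots,x_k]$, with the $t$-series serving only as an auxiliary device. It therefore transfers along the ring homomorphism $x_j\mapsto x^{a_j}$ into the group algebra $\mathbb{C}[G]$, which is exactly how the paper uses it in the proof of Lemma \ref{identity 1}. The denominators $m_r!\,r^{m_r}$ cause no trouble there, because $\mathbb{C}[G]$ is a $\mathbb{Q}$-algebra.

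A cosmetic point: you use $t$ both as your formal variable and, in the statement, as the index of $p_t$. Rename one of them.
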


We prove the following lemma using Lemma \ref{newton}. 
\begin{lemma}\label{identity 1} 
Let  $A = \{a_1, a_2, \dots,a_k\}$ be a nonempty subset of $G$. Let $R(m)$ and $R_i(m)$ be defined as in \eqref{R(m)} and \eqref{R_i(m)}. Then for $m \in G$, we have the following identity: 
	\begin{equation*}
		R(m) = \sum_{i = 1}^{p(h)} (-1)^h h! \prod_{r = 1}^{h} \frac{(-1)^{\mu_i (r)}} {(\mu_i (r))! r^{\mu_i (r)}} R_i(m).
	\end{equation*}
\end{lemma}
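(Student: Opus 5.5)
The plan is to pass to the group algebra $\mathbb{C}[G]$ and apply Lemma \ref{newton} with the formal variables specialized to group-algebra monomials. Concretely, we set $x_j = x^{a_j}$ for $j = 1, \dots, k$. Since $\mathbb{C}[G]$ is a commutative $\mathbb{C}$-algebra, the identity of Lemma \ref{newton} holds after this substitution. This is because it is a polynomial identity with rational coefficients in the commutative polynomial ring, hence survives any ring homomorphism into a commutative $\mathbb{Q}$-algebra.

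Under the substitution the two sides become the following.
\begin{itemize}
\item The power sum is $p_r = \sum_{j} x^{r a_j}$.
\item The elementary symmetric polynomial is $e_h = \sum_{j_1 < \cdots < j_h} x^{a_{j_1} + \cdots + a_{j_h}}$. Its coefficient at $x^m$ counts $h$-subsets of $A$ with sum $m$. Since $R(m)$ counts ordered $h$-tuples of distinct elements, that coefficient equals $R(m)/h!$.
\end{itemize}

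Next I would identify the products on the right-hand side with the $R_i(m)$. A tuple $(m_1, \dots, m_h)$ with $\sum r m_r = h$ corresponds bijectively to a partition $\boldsymbol{\lambda_i} \in \mathbb{P}(h)$ via $m_r = \mu_i(r)$. So the sum over such tuples is exactly the sum over $i = 1, \dots, p(h)$. For that $i$ we have
\[\prod_r p_r^{\mu_i(r)} = \prod_r \Bigl(\sum_j x^{r a_j}\Bigr)^{\mu_i(r)}.\]
Expanding, its coefficient at $x^m$ is the number of choices of one element of $A$ for each nonzero part $\lambda_{il}$ (with repetition allowed) such that $\sum_l \lambda_{il} a_{j_l} = m$. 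The factors are ordered by the positions of the nonzero parts of $\boldsymbol{\lambda_i}$, which matches the ordering in \eqref{R_i(m)}. Hence this coefficient is $R_i(m)$.

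It remains to compare coefficients at $x^m$ and multiply by $h!$. The term $(-p_r)^{m_r}$ contributes the sign $(-1)^{\mu_i(r)}$. This gives
\[R(m) = h!\, e_h[x^m] = \sum_{i=1}^{p(h)} (-1)^h h! \prod_{r=1}^h \frac{(-1)^{\mu_i(r)}}{\mu_i(r)!\, r^{\mu_i(r)}} R_i(m),\]
which is the claimed identity.

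The case $h > k$ needs a separate check. There $e_h = 0$, and also $R(m) = 0$, since there are not $h$ distinct elements available. Newton's identity as stated in Lemma \ref{newton} remains valid for $h > k$, so no separate argument is needed.

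The main point requiring care is the identification of the coefficient of $\prod_r p_r^{\mu_i(r)}$ with $R_i(m)$. One must check that $R_i(m)$ counts unrestricted choices, one per nonzero part, with no distinctness imposed and with parts of equal size treated as ordered. The statement in \eqref{R_i(m)} as a count of $h$-tuples with repeated blocks is exactly this once we note that the tuple is determined by $(a_{j_{i_1}}, \dots, a_{j_h})$. Everything else is bookkeeping of signs.
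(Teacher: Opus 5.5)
Your proposal is correct and follows essentially the same route as the paper: substitute $x_j = x^{a_j}$ in $\mathbb{C}[G]$, apply Lemma \ref{newton}, identify the coefficient of $x^m$ in $h!\,e_h$ with $R(m)$ and in $\prod_r p_r^{\mu_i(r)} = \prod_{j=i_1}^{h} p_{\lambda_{ij}}$ with $R_i(m)$, and compare coefficients. The only differences are cosmetic: the paper first isolates the $\boldsymbol{\lambda_1}$ term $p_1^h = \sum_m R_1(m)x^m$ and then rearranges, whereas you read off all coefficients at once; you also justify the substitution and the case $h>k$ explicitly, which the paper leaves implicit.
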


	\begin{proof} 
Let $x$ be a variable, and let $x_j = x^{a_j}$ for $j = 1, 2, \dots, k$. Consider the polynomial $p_1:= x_1 + \cdots + x_k$ in the group algebra $\mathbb{C}[G]$. Then
	\begin{align*}
		p_1^h = (x_1 + x_2 + \cdots + x_k )^h & = (x^{a_1} + x^{a_2} + \cdots + x^{a_k} )^h \\
        & = \sum_{1 \leq j_1,\dots, j_h \leq k} x^{a_{j_1} + a_{j_2} + \cdots + a_{j_h}}\\
		& = \sum_{ m \in hA} \Biggl(\sum_{\substack{1 \leq j_1,\dots, j_h \leq k \\ a_{j_1} + a_{j_2} + \cdots + a_{j_h} = m } } x^m\Biggr)\\
		& = \sum_{ m \in hA} x^m \Biggl(\sum_{\substack{1 \leq j_1,\dots, j_h \leq k \\ a_{j_1} + a_{j_2} + \cdots + a_{j_h} = m } }1\Biggr)\\
		& = \sum_{ m \in hA} R_1(m) x^m.
	\end{align*}
Thus
\begin{equation*}
  p_1^h = \sum_{m \in hA} R_1(m) x^m,
\end{equation*}
and so it follows from Lemma \ref{newton} that
	\begin{equation}\label{representation-lem-eq1}
		\sum_{m \in hA} R_1(m) x^m = h!e_h + (-1)^{h + 1} h!\sum_{\substack{m_1 + 2 m_2 + \cdots+ h m_h = h \\  m_1 \geq 0, \dots, m_h \geq 0  \\ m_1 \neq h} } \prod_{r = 1}^{h} \frac{(-p_r)^{m_r}}{m_r! r^{m_r}}.
	\end{equation}
Any $\boldsymbol{\lambda_i} \in \mathbb{P}(h)$ can be expressed as 
\[\boldsymbol{\lambda_i} = (\underbrace{0, \ldots, 0}_{m_0 \ \text{times}}, \underbrace{1, \ldots, 1}_{m_1 \ \text{times}}, \ldots, \underbrace{h, \ldots, h}_{m_h \ \text{times}}),\]
where $m_r \geq 0$ for $r = 1, \dots, h$, and $m_0 = h - \sum_{r = 1}^{h}m_r$. Then $m_1 + 2 m_2 + \cdots + h m_h = h $. Clearly,  $m_r = \mu_i(r)$ for $r \in \{1,2,\dots, h\}$. Conversely, if $m_1 + 2 m_2 + \cdots + h m_h = h $,  then this corresponds to the partition $\boldsymbol{\lambda_i} \in \mathbb{P}(h)$ given by 
\[\boldsymbol{\lambda_i} = (\underbrace{0, \ldots, 0}_{m_0 \ \text{times}}, \underbrace{1, \ldots, 1}_{m_1 \ \text{times}}, \ldots, \underbrace{h, \ldots, h}_{m_h \ \text{times}}),\]
where $m_0 = h - \displaystyle\sum_{r = 1}^{h} m_r$. Thus there is a one-to-one correspondence between the set of partitions $\mathbb{P}(h)$ and the set of $h$-tuples $(m_1, \ldots, m_h)$ of nonnegative integers satisfying $m_1 + 2 m_2 + \cdots + h m_h = h$. Also $m_1 = h$ corresponds to $\boldsymbol{\lambda_1} = (1, 1, \dots, 1)$. Therefore, the identity \eqref{representation-lem-eq1} can be expressed as follows:

\begin{equation*}
		p_1^h = h! e_h +  (-1)^{h+1} h!\sum_{\substack{\boldsymbol{\lambda} \in \mathbb{P}(h) \\ \boldsymbol{\lambda} \neq \boldsymbol{\lambda_1}}} \Biggl(\prod_{r = 1}^{h} \frac{(-p_r)^{\mu_{\boldsymbol{\lambda}} (r)}} {(\mu_{\boldsymbol{\lambda}} (r))! r^{\mu_{\boldsymbol{\lambda}} (r)}}\Biggr).
	\end{equation*}
The above identity can be written as
	\begin{equation}\label{equation 3.1}
		p_1^h = h! e_h +  (-1)^{h + 1} h!\sum_{i = 2}^{p(h)} \Biggl(\prod_{r = 1}^{h} \frac{(-p_r)^{\mu_i (r)}} {(\mu_i (r))! r^{\mu_i (r)}}\Biggr).
	\end{equation}
For $\boldsymbol{\lambda_i} =  (\lambda_{i1}, \lambda_{i2}, \dots, \lambda_{ih}) \in \mathbb{P}(h)$, let $i_{1} = \min\{j: 1 \leq j \leq h \ \text{and} \ \lambda_{i j} > 0\}$. That is, $\lambda_{i i_1}$ is the smallest positive part of the partition $\boldsymbol{\lambda_i}$. Then it is easy to see that
\[\prod_{r = 1}^{h} (p_r)^{\mu_i (r)} = \prod_{j = i_{1}}^{h} p_{\lambda_{i j}}.\]
Hence it follows from \eqref{equation 3.1} that

\begin{align}\label{equation 3.1a}
\sum_{m \in hA} R_1(m) x^m = h! e_h + (-1)^{h + 1} h! \sum_{i = 2}^{p(h)} \Biggl(\biggl(\prod_{r = 1}^{h} \frac{(-1)^{\mu_i (r)}}{(\mu_i (r))!r^{\mu_i (r)}}\biggr)\biggl(\prod_{\substack{j = i_{1}}}^{h} p_{\lambda_{ij}}\biggr)\Biggr).
	\end{align}

Now,
	\begin{align*}
	 h! e_h =  h!\sum\limits_{1 \leq j_1 < \cdots < j_h \leq k} x_{j_1}\cdots x_{j_h} &= h! \sum_{1 \leq j_1 < j_2 < \cdots < j_h \leq k} x^{a_{j_1}} x^{a_{j_2}} \cdots x^{a_{j_h}}\\
		&= \sum_{\substack{1 \leq j_1, \dots, j_h \leq k \\ a_{j_r} \neq a_{j_s} \ \text{for} \ j_r \neq j_s }} x^{a_{j_1} + a_{j_2} + \cdots + a_{j_h}}\\
		&= \sum_{m \in hA} \Biggl(\sum_{\substack{1 \leq j_{i_1}, \dots, j_h \leq k \\ a_{j_1} + a_{j_2} + \cdots + a_{j_h} = m \\ a_{j_r} \neq a_{j_s} \ \text{for} \ j_r \neq j_s }} x^{m}\Biggr)\\ 
 &= \sum_{m \in hA}  \Biggl(\sum_{\substack{1 \leq j_1, \dots, j_h \leq k \\a_{j_1} + a_{j_2} + \cdots + a_{j_h} = m \\a_{j_r} \neq a_{j_s} \ \text{for}\ j_r \neq j_s\\ }}1\Biggr)x^m,
	\end{align*}
and so it follows from the definition of $R(m)$ that
\begin{equation}\label{equation 3.1b}
   h! e_h = \sum_{m \in hA} R(m) x^m.
\end{equation}
Next,
	\begin{align*}
	 \prod_{j = i_1}^{h} p_{\lambda_{ij}} = \prod_{j = i_1}^{h} (x_1^{\lambda_{ij}} + \cdots + x_k^{\lambda_{ij}}) &= \prod_{j = i_1}^{h} (x^{\lambda_{ij}a_1} + \cdots + x^{\lambda_{ij}a_k})\\
&= \sum_{1 \leq j_{i_{1}}, \dots, j_h \leq k} x^{\lambda_{i i_1} a_{j_{i_{1}}}}\cdots x^{\lambda_{i h} a_{j_h}}\\	
		&= \sum_{1 \leq j_{i_1}, \dots, j_h \leq k} x^{\lambda_{i i_1} a_{j_{i_1}} + \cdots + \lambda_{i h} a_{j_h}}\\
		&= \sum_{m \in hA} \biggl(\sum_{\substack{1 \leq j_{i_1}, \dots, j_h \leq k \\ \lambda_{i i_1} a_{j_{i_1}} + \cdots + \lambda_{i h} a_{j_{h}} = m } } x^{m}\biggr)\\
&= \sum_{m \in hA} \biggl(\sum_{\substack{1 \leq j_{i_1}, \dots, j_h \leq k \\ \lambda_{i i_1} a_{j_{i_1}} + \cdots + \lambda_{i h} a_{j_{h}} = m }} 1\biggr) x^{m},
	\end{align*}
and so it follows from the definition of $R_i(m)$ that

\begin{equation}\label{equation 3.1c}
 \prod_{j = i_1}^{h} p_{\lambda_{ij}} = \sum_{m \in hA} R_i(m) x^{m}.
\end{equation}
Therefore, it follows from \eqref{equation 3.1a}, \eqref{equation 3.1b} and \eqref{equation 3.1c} that

	\begin{equation*}
		\begin{aligned}
			\sum_{m \in hA} R_1(m) x^m = &\sum_{m \in hA} R(m) x^m  + (-1)^{h + 1} h! \sum_{i = 2}^{p(h)} \Biggl(\prod_{r = 1}^{h} \frac{(-1)^{\mu_i (r)}} {(\mu_i (r))! r^{\mu_i (r)}} \biggl(\sum_{m \in hA} R_i(m) x^{m}\biggr)\Biggr),
		\end{aligned}
	\end{equation*}
which implies that
\begin{equation*}
		\begin{aligned}
			\sum_{m \in hA} R_1(m) x^m = &\sum_{m \in hA} R(m) x^m  + (-1)^{h + 1} h! \sum_{m \in hA}\Biggl(\sum_{i = 2}^{p(h)} \biggl(\prod_{r = 1}^{h} \frac{(-1)^{\mu_i (r)}} {(\mu_i (r))! r^{\mu_i (r)}} R_i(m)\biggr)\Biggr) x^{m},
		\end{aligned}
	\end{equation*}
and so
	\begin{equation}\label{equation 3.2}
		\begin{aligned}
			\sum_{m \in hA} R_1(m) x^m = \sum_{m \in hA} \Biggl(R(m) + (-1)^{h + 1} h! \sum_{i = 2}^{p(h)} \biggl(\prod_{r = 1}^{h}  \frac{(-1)^{\mu_i (r)}} {(\mu_i (r))! r^{\mu_i (r)}} R_i(m)\biggr)\Biggr) x^{m}
		\end{aligned}
	\end{equation}
By comparing the coefficients of $x^m$ in \eqref{equation 3.2}, we get
	\begin{equation*}
		R_1(m) = R(m) + (-1)^{h + 1} h!\sum_{i = 2}^{p(h)} \Biggl(\prod_{r = 1}^{h} \frac{(-1)^{\mu_i (r)}} {(\mu_i (r))! r^{\mu_i (r)}} R_i(m)\Biggr).
	\end{equation*}
This implies that
	\begin{equation*}
		R(m) = \sum_{i = 1}^{p(h)} (-1)^h h! \prod_{r = 1}^{h} \frac{(-1)^{\mu_i (r)}} {(\mu_i (r))! r^{\mu_i (r)}} R_i(m).
	\end{equation*}	
\end{proof}

\begin{lemma}[{\cite[Lemma 3.3]{gallardo2002}}]\label{max sum}
Let $d \geq 3$ be an odd integer, and let $Y$ be a positive real number. For any $\mathbf{y} = (y_1, \dots, y_d) \in \mathbb{R}^d$, let 
\[T(\mathbf{y}) = \sum_{j = 1}^{d} y_j e{\left(j/d\right)}.\]
Then 
\[\max_{\mathbf{y} \in [0, Y]^d} |T(\mathbf{y})| = \frac{Y}{2\sin(\pi/2d)}.\]
\end{lemma}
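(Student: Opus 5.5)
We need to prove Lemma (max sum): for odd $d\ge 3$, $\max_{\mathbf y\in[0,Y]^d}|T(\mathbf y)| = Y/(2\sin(\pi/2d))$.

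The plan is a convexity argument followed by an explicit optimization over $0/Y$ vectors. Since $T$ is linear in $\mathbf y$, the function $\mathbf y\mapsto |T(\mathbf y)|$ is convex. Its maximum over the cube $[0,Y]^d$ is therefore attained at a vertex, so it suffices to maximize $|\sum_{j\in S} Y e(j/d)|$ over subsets $S\subseteq\{1,\dots,d\}$. By homogeneity we may take $Y=1$. For each such $S$ and each unit direction $\theta$, we have $|\sum_{j\in S}e(j/d)| = \max_\theta \operatorname{Re}\big(e(-\theta)\sum_{j\in S}e(j/d)\big)$. Exchanging the two maxima, for fixed $\theta$ the best $S$ consists exactly of those $j$ with $\cos(2\pi(j/d-\theta))>0$, i.e.\ the $d$-th roots of unity lying in the open half-plane centered at direction $\theta$.

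Next we count the roots in such a half-plane. The $d$ roots of unity are equally spaced with gap $2\pi/d$, so an open half-plane (an arc of length $\pi$) contains either $(d-1)/2$ or $(d+1)/2$ consecutive roots, since $d$ is odd. The sum of $m$ consecutive roots has modulus $|\sin(\pi m/d)/\sin(\pi/d)|$. With $m=(d\pm1)/2$ we get $\sin(\pi m/d)=\sin(\pi/2\pm\pi/2d)=\cos(\pi/2d)$. Hence the modulus is
\[
\frac{\cos(\pi/2d)}{\sin(\pi/d)}=\frac{1}{2\sin(\pi/2d)}.
\]
This value is attained, for instance by $S=\{1,\dots,(d+1)/2\}$. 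It is also an upper bound for every vertex, because for any $S$ and the optimal $\theta$ for that $S$, $\operatorname{Re}$ is at most the half-plane sum, and every half-plane sum equals the value above (or is bounded by it).

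The main care point is the exchange-of-maxima step. I would justify it directly: for any $\mathbf y\in[0,1]^d$ and $\theta$,
\[
\operatorname{Re}\big(e(-\theta)T(\mathbf y)\big)=\sum_j y_j\cos(2\pi(j/d-\theta))\le \sum_{j:\cos>0}\cos(2\pi(j/d-\theta)),
\]
and the right-hand side is bounded by the modulus of the corresponding consecutive-root sum. Choosing $\theta=\arg T(\mathbf y)$ then gives the upper bound without needing the vertex reduction at all. The remaining task is the routine geometric-series evaluation of the sum of consecutive roots. Multiplying back by $Y$ yields the claimed formula.
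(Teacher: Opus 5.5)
Your proof is correct and complete. The paper gives no proof of this lemma to compare with: it is quoted from \cite[Lemma 3.3]{gallardo2002}, and only the upper bound (with $Y=n/d$) is used, in the proof of Lemma~\ref{max ablian}.

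Your second route is already a self-contained proof of both directions, and the first paragraph can be dropped. Fix $\mathbf y\in[0,1]^d$ and write $T(\mathbf y)=|T(\mathbf y)|\,e(\theta)$. With the paper's convention $e(v)=\exp(2\pi i v)$ this means $\theta=\arg T(\mathbf y)/(2\pi)$, not $\arg T(\mathbf y)$; any $\theta$ works if $T(\mathbf y)=0$. Then
\[|T(\mathbf y)|=\sum_j y_j\cos(2\pi(j/d-\theta))\le\sum_{j\in H_\theta}\cos(2\pi(j/d-\theta))\le\Bigl|\sum_{j\in H_\theta}e(j/d)\Bigr|,\]
where $H_\theta$ is the set of residues $j$ modulo $d$ lying in the open interval $(d\theta-d/4,\,d\theta+d/4)$.

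Since this interval has length $d/2\notin\mathbb Z$, $H_\theta$ is a block of either $(d-1)/2$ or $(d+1)/2$ consecutive residues. Both sizes give the modulus $\cos(\pi/2d)/\sin(\pi/d)=1/(2\sin(\pi/2d))$. The indicator vector of $\{1,\dots,(d+1)/2\}$ attains this value, which gives the equality.

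This is exactly where the oddness of $d$ enters. For even $d$, a block of $d/2$ roots gives the strictly larger value $1/\sin(\pi/d)$.

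Once you use this direct bound, the convexity/vertex reduction and the ``exchange of maxima'' in your first paragraph are redundant.
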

	
We prove the following lemma using Lemma \ref{max sum}. The argument of the proof is similar to the argument in the proof of a similar result for a cyclic group $\mathbb{Z}_n$ in \cite[Lemma $2.5$]{tang2019}.
 
\begin{lemma}\label{max ablian}
Let $G$ be a finite abelian group of odd order $n$, where $n \geq 3$. Let $A$ be a nonempty subset of $G$, and let $g \in G$ with $g \neq 0$. Then
\[ |S_A(g)| \leq \frac{n}{3}.\]
\end{lemma}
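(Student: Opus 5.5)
The plan is to reduce the estimate to Lemma \ref{max sum} by grouping the elements of $A$ according to the value of the character. Since $\chi_a(g) = \chi_g(a)$, we have
\[S_A(g) = \sum_{a \in A} \chi_a(g) = \sum_{a \in A} \chi_g(a).\]
Let $d$ be the order of $g$. Because $g \neq 0$ and $d$ divides the odd integer $n$, we get that $d \geq 3$ is odd. By the last listed property of characters, $\chi_g$ is a homomorphism from $G$ onto the cyclic group of $d$-th roots of unity. Its kernel $K$ therefore has index $d$, so $|K| = n/d$.

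For $j = 1, \dots, d$, set
\[y_j = |\{a \in A : \chi_g(a) = e(j/d)\}|.\]
Each such set lies in a single coset of $K$, so $0 \leq y_j \leq n/d$. Moreover $S_A(g) = \sum_{j=1}^{d} y_j e(j/d) = T(\mathbf{y})$ in the notation of Lemma \ref{max sum}. Applying that lemma with $Y = n/d$ gives
\[|S_A(g)| \leq \frac{n}{2d \sin(\pi/(2d))}.\]

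It remains to check that $2d\sin(\pi/(2d)) \geq 3$ for every odd $d \geq 3$. At $d = 3$ this is an equality, since $6 \sin(\pi/6) = 3$. Writing $x = \pi/(2d)$, we have
\[2d \sin(\pi/(2d)) = \pi \cdot \frac{\sin x}{x}.\]
Since $\sin x / x$ is decreasing on $(0, \pi/2]$, this expression is nondecreasing in $d$. Hence it is at least $3$ for all $d \geq 3$, and so $|S_A(g)| \leq n/3$.

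The only point requiring care is the bookkeeping that identifies $S_A(g)$ with a sum of the form $T(\mathbf{y})$ with coefficients bounded by $n/d$. That identification relies on the surjectivity of $\chi_g$ onto the $d$-th roots of unity, so that every fibre is a full coset of $K$ of size $n/d$. Once that is in place, the rest is the elementary monotonicity of $\sin x / x$.
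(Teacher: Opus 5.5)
Your proposal is correct and follows essentially the same route as the paper: you group $A$ by the fibres of $\chi_g$ (cosets of its kernel, each of size $n/d$), apply Lemma \ref{max sum} with $Y = n/d$, and then show $2d\sin(\pi/(2d)) \geq 3$ for odd $d \geq 3$. The only cosmetic difference is that the paper gets this last inequality from $\sin\theta \geq 3\theta/\pi$ on $(0,\pi/6]$, which is the same fact as your monotonicity of $\sin x/x$ compared at $x = \pi/6$.
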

	
\begin{proof}
Let $\chi_g \in \widehat{G}$, where $\chi_g \neq \chi_0$. The image of $G$ under $\chi_g$ is a cyclic group of the $d$-th roots of unity, where $d$ is order of $\chi_g$ in $\widehat{G}$. Since $n$ is odd and $d$ divides $n$, it follows that $d$ must be an odd integer with $d \ge 3$. We partition $G$ into $d$  cosets $C_1, \ldots, C_d$ of the kernel of $\chi_g$. These cosets are defined as
\[C_j = \{ a \in G : \chi_g(a) = e{\left(j/d\right)}\} ~\text{for}~ j = 1, \dots, d.\]
	Let
\[y_j = \left|\{ a \in A : \chi_g(a) = e{\left(j/d\right)} \}\right| ~\text{for}~  j = 1, 2, \dots, d.\]
Then
\[\sum_{a \in A} \chi_g(a)  = \sum_{j = 1}^{d} y_j e{\left(j/d\right)}.\]
Since $|C_j| = \frac{n}{d}$, it follows that $y_j = |A \cap C_j| \leq \frac{n}{d}$.
Therefore, it follows from Lemma \ref{max sum} that
\[\left| \sum_{a \in A} \chi_g(a) \right| \leq \frac{n/d} {2 \sin(\pi/2d)}.\]
We know that if $\theta \in (0, \pi/6]$, then $\sin \theta \geq \frac{3 \theta}{\pi}$. Since $\pi/2d \in (0, \pi/6]$, it follows that
\[\sin \left(\frac{\pi}{2 d}\right) \geq \frac{3 (\pi/2 d)}{\pi} = \frac{3}{2d}.\]
Hence
\[\left|\sum_{a \in A} \chi_g(a) \right| \leq \frac{n}{3}.\]
Since $\chi_g(a) = \chi_a(g)$, it follows that
\[\left|\sum_{a \in A} \chi_a(g) \right| \leq \frac{n}{3},\]
and so
\[|S_A(g)|  \leq \frac{n}{3}.\]
This completes the proof.
\end{proof}
	
\begin{lemma}[{\cite[Proposition 1.3]{grynkiewicz2013}}]\label{cha-identity 1}
	Let $G$ be a finite abelian group. Let $\chi \in \widehat{G}$. Then
		\[
		\frac{1}{|G|} \sum_{g \in G} \chi(g) =
		\begin{cases}
			1, & \chi  = \chi_0 \\
			0, & \chi  \neq \chi_0.
		\end{cases}
		\]
\end{lemma}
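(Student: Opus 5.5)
The statement immediately above is Lemma \ref{cha-identity 1}, the orthogonality relation for characters of a finite abelian group. The plan is the standard translation-invariance argument.

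If $\chi = \chi_0$ is the trivial character, then $\chi(g) = 1$ for every $g \in G$. Hence $\sum_{g \in G} \chi(g) = |G|$, and dividing by $|G|$ gives $1$.

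Now suppose $\chi \neq \chi_0$. Then there is some $h \in G$ with $\chi(h) \neq 1$. Set $S = \sum_{g \in G} \chi(g)$. Since $g \mapsto g + h$ is a bijection of $G$, reindexing the sum gives
\[S = \sum_{g \in G} \chi(g + h) = \chi(h) \sum_{g \in G} \chi(g) = \chi(h) S,\]
where the middle equality uses that $\chi$ is a homomorphism into $\mathbb{C}^{\times}$ (the identity $\chi_g(a+b) = \chi_g(a)\chi_g(b)$ listed in Section \ref{sec-basic-concepts}). Therefore $(1 - \chi(h)) S = 0$. Because $\chi(h) \neq 1$, this forces $S = 0$, and so $\frac{1}{|G|} S = 0$.

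There is no real obstacle here. The only point to be careful about is the reindexing step, which relies on translation by $h$ being a permutation of the finite group $G$.
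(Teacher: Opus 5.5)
Your proof is correct. In the trivial case the sum is $|G|$. For $\chi \neq \chi_0$, reindexing by $g \mapsto g+h$ with $\chi(h) \neq 1$ gives $(1-\chi(h))S = 0$, hence $S = 0$. Identifying $\chi_0$ with the trivial character is justified because $\chi_0$ is the image of $0$ under the isomorphism $g \mapsto \chi_g$.

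The paper does not prove this lemma itself; it only cites Proposition 1.3 of Grynkiewicz's monograph. Your translation-invariance argument is the standard proof of this orthogonality relation.
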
		
	
\begin{lemma}[{\cite[Corollary 19.1.4]{grynkiewicz2013}}]\label{cha-identity 2}
 Let $G$ be a finite abelian group and let $A$ be a nonempty subset of $G$. Then 
 \begin{equation*}
 	\sum_{g \in G \setminus \{0\}} |S_A(g)|^2 = |A| |G| - |A|^2
 \end{equation*}
\end{lemma}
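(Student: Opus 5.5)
The plan is to prove this identity as Parseval's identity for the function $S_A$. I would compute the full sum $\sum_{g \in G} |S_A(g)|^2$ over all of $G$ using the orthogonality relation of Lemma \ref{cha-identity 1}. Then I would subtract the contribution of $g = 0$.

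First I expand the square. By definition $S_A(g) = \sum_{a \in A} \chi_a(g)$. The values of characters are roots of unity, so $\overline{\chi_b(g)} = \chi_b(g)^{-1} = \chi_{-b}(g)$. Using $\chi_a(g)\chi_{-b}(g) = \chi_{a-b}(g)$ from the listed identities, this gives
\[
|S_A(g)|^2 = S_A(g)\overline{S_A(g)} = \sum_{a \in A}\sum_{b \in A} \chi_{a-b}(g).
\]
Summing over $g \in G$ and interchanging the finite sums, I get
\[
\sum_{g \in G} |S_A(g)|^2 = \sum_{a, b \in A} \sum_{g \in G} \chi_{a-b}(g).
\]

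Next I apply Lemma \ref{cha-identity 1} to the character $\chi = \chi_{a-b}$. Since $g \mapsto \chi_g$ is an isomorphism $G \to \widehat{G}$, the character $\chi_{a-b}$ is the trivial character $\chi_0$ exactly when $a = b$. So the inner sum equals $|G|$ when $a = b$ and $0$ otherwise. Only the $|A|$ diagonal pairs survive, which gives $\sum_{g \in G}|S_A(g)|^2 = |A||G|$.

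Finally, for the term $g = 0$, every character satisfies $\chi_a(0) = 1$, so $S_A(0) = |A|$ and $|S_A(0)|^2 = |A|^2$. Subtracting this term gives $\sum_{g \ne 0}|S_A(g)|^2 = |A||G| - |A|^2$.

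There is no real obstacle here. The only point needing care is bookkeeping: complex conjugation must be identified with negation of the index, and one must use that $\chi_{a-b}$ is trivial only when $a = b$, which is injectivity of the fixed isomorphism $g \mapsto \chi_g$.
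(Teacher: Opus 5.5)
Your proof is correct. Expanding $|S_A(g)|^2=\sum_{a,b\in A}\chi_{a-b}(g)$, summing over all $g\in G$ with the orthogonality relation of Lemma~\ref{cha-identity 1} (noting that $\chi_{a-b}$ is trivial only when $a=b$), and then removing the $g=0$ term $|S_A(0)|^2=|A|^2$ is the standard Parseval computation. The paper gives no proof of this lemma and only quotes it from Grynkiewicz's book, so your argument makes it self-contained using only the character identities listed in Section~\ref{sec-basic-concepts}.
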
	

\begin{lemma}\label{bound R}
Let  $A = \{a_1, a_2, \dots,a_k\}$ be a nonempty subset of a finite abelian group $G$ of odd order. Let $h$ be an integer with $h \geq 4$, and let $i_0$ be the index such that $\boldsymbol{\lambda_{i_0}} = (0, 0, 1, \dots, 1, 3)$. Let $R(m)$ and $R_i(m)$ be defined as in \eqref{R(m)} and \eqref{R_i(m)}. Then for $m \in G$, the following inequalities hold:
\begin{enumerate}
\item $R_2(m) \leq k(k-1)\cdots (k - h + 3) + \binom{h - 2}{2} R_3(m)$,
\item $R_3(m) \leq k^{h - 3}$,
\item $R_i(m) \leq k^{h - 4}$ for $i \geq 4$ with $i \neq i_0$.
\item For $i \geq 4$ with $i \neq i_0$, let $\boldsymbol{\lambda_i} =  (\lambda_{i1}, \lambda_{i2}, \dots, \lambda_{ih}) \in \mathbb{P}(h)$. Then 
\[\prod_{r = 1}^{h} (\mu_i(r))! r^{\mu_i(r)} \geq 4.\]
\end{enumerate}
\end{lemma}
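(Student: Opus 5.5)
The plan is to bound each $R_i(m)$ by counting how many of the variables in its defining tuple can be chosen freely. The key observation is that if some part of $\boldsymbol{\lambda_i}$ equals $1$ or $2$, then that variable is determined by the others: $G$ has odd order, so multiplication by $2$ (and trivially by $1$) is a bijection of $G$. Let $s$ be the number of positive parts of $\boldsymbol{\lambda_i}$. Then $R_i(m)\le k^{s}$ always, and $R_i(m)\le k^{s-1}$ whenever some part lies in $\{1,2\}$.

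First I list the early partitions in the colexicographic order. We have $\boldsymbol{\lambda_2}=(0,1,\dots,1,2)$ with $s=h-1$. Next, $\boldsymbol{\lambda_3}=(0,0,1,\dots,1,2,2)$ with two $2$'s and $h-4$ ones, so $s=h-2$. The partitions with exactly $h-2$ positive parts are exactly $\boldsymbol{\lambda_3}$ and $\boldsymbol{\lambda_{i_0}}$. Every other partition with $i\ge 4$ therefore has $s\le h-3$.

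\textbf{(2).} $\boldsymbol{\lambda_3}$ contains a $2$, so $R_3(m)\le k^{h-3}$.

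\textbf{(3).} Take $i\ge4$ with $i\neq i_0$, so $s\le h-3$. If some part is $1$ or $2$, then $R_i(m)\le k^{s-1}\le k^{h-4}$. Otherwise all parts are $\ge 3$, so $3s\le h$. For $h\ge5$ this gives $s\le h/3\le h-4$ (the case $h=5$ forces $s=1$), hence $R_i(m)\le k^{s}\le k^{h-4}$. For $h=4$ the only such partition is $(0,0,0,4)$. There $4a=m$ has at most one solution, because $4$ is invertible in a group of odd order, so $R_i(m)\le 1=k^{0}$.

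\textbf{(1).} $R_2(m)$ counts tuples $(b_1,\dots,b_{h-2},c)\in A^{h-1}$ with $b_1+\cdots+b_{h-2}+2c=m$. I split according to whether the $b_j$ are pairwise distinct.
\begin{itemize}
\item If they are distinct, there are at most $k(k-1)\cdots(k-h+3)$ choices of the $b$'s, and $c$ is then determined.
\item If not, some pair $b_r=b_s$ with $r<s$ coincides; there are $\binom{h-2}{2}$ such pairs. For a fixed pair, the tuple becomes a representation $m=2b_r+2c+\sum_{j\ne r,s}b_j$ of $\boldsymbol{\lambda_3}$-type. This map is injective, since $b_s$ is recovered from $b_r$. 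So each pair contributes at most $R_3(m)$.
\end{itemize}
A union bound gives (1).

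\textbf{(4).} I do a case analysis on $\boldsymbol{\lambda_i}$ for $i\ge4$, $i\ne i_0$.
\begin{itemize}
\item If some part $r\ge4$ occurs, its factor $r^{\mu_i(r)}\ge4$.
\item If the largest part is $3$ and $\mu_i(3)\ge2$, the factor is $2!\,3^2\ge4$.
\item If $\mu_i(3)=1$, then $\boldsymbol{\lambda_i}\neq\boldsymbol{\lambda_{i_0}}$ forces a part $2$, giving at least $3\cdot2=6$.
\item If the largest part is $2$, then $\mu_i(2)\ge3$, since one or two $2$'s give $\boldsymbol{\lambda_2},\boldsymbol{\lambda_3}$. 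The factor is $\mu_i(2)!\,2^{\mu_i(2)}\ge48$.
\end{itemize}

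The main care point is (3). One cannot always solve for a variable, because parts like $3$ or $h$ need not be invertible modulo $n$. This is handled by the counting argument $3s\le h$ together with the separate treatment of $h=4$.
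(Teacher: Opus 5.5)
Your proof is correct and follows the paper's route: the same distinct/non-distinct split with an injection into the $R_3(m)$-set for (1), counting free variables and using that $2$ is invertible in a group of odd order for (2), and the same case analysis on the largest part for (4). For (3) you supply the argument the paper only describes as ``a similar combinatorial argument.'' When no part lies in $\{1,2\}$ you bound the count by $3s\le h$, and you handle $h=4$, i.e.\ $(0,0,0,4)$, by the invertibility of $4$; this is exactly the care needed, since a part such as $3$ need not be invertible.
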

	
\begin{proof}		
Note that
\begin{align*}
R_2(m) = \left|\{(a_{j_2}, \dots, a_{j_{h-1}}, a_{j_h}, a_{j_h}) \in A^h : m= a_{j_2} + \cdots + a_{j_{h - 1}} + 2 a_{j_{h}}\}\right|,
\end{align*}
and
\begin{equation*}	
  R_3(m) = \left|\{(a_{j_3}, \dots, a_{j_{h - 2}}, a_{j_{h - 1}}, a_{j_{h - 1}}, a_{j_h}, a_{j_h}) \in A^h: m= a_{j_3} + \cdots + 2 a_{j_{h-1}} + 2 a_{j_{h}}\}\right|.
\end{equation*}

To get the upper bound for $R_2(m)$, we consider two possiblities:
\begin{itemize}
  \item  If $a_{j_2}, \dots, a_{j_{h - 2}}$ all are distinct, then the number of choices for $a_{j_2}, \dots, a_{j_{h - 1}}$ is $k (k - 1) \cdots (k - h + 3)$, and any choice of $a_{j_2}, \dots, a_{j_{h - 1}}$ fixes $2 a_{j_{h}}$, and so it fixes the value of $a_{j_{h}}$, since $n$ is odd. Therefore, \[R_2(m) \leq k (k - 1) \cdots (k - h + 3).\]
  \item If at least two of  $a_{j_2}, \dots, a_{j_{h - 2}}$ are same, then clearly  $R_2(m) \leq \binom{h - 2}{2} R_3(m)$.
\end{itemize} 
Therefore,
\[R_2(m) \leq k(k-1)\cdots (k - h + 3) + \binom{h - 2}{2} R_3(m).\]

Next, since the number of choices for $a_{j_3}, \dots, a_{j_{h - 1}}$ is $k^{h - 3}$, any of these choices fixes the value of $a_{j_{h}}$, it follows that \[R_3(m) \leq k^{h - 3}.\]

A similar combinatorial argument proves that if $i \geq 4$ and $i \neq i_0$, then
\[R_i(m) \leq k^{h - 4}.\]

Finally, to prove the last inequality, we consider three cases for $\lambda_{i h}$:
\begin{itemize}
  \item [(i)] If $\lambda_{i h} < 3$, then $\lambda_{i h} = \lambda_{i(h - 1)} =  \lambda_{i(h - 2)} = 2$;
  \item [(ii)] If $\lambda_{i h} = 3$, then $ \lambda_{i(h - 1)}\geq 2$;
  \item [(iii)]  $\lambda_{i h} > 3$,
\end{itemize}
In each of the three cases, it is easy to verify that $\prod_{r = 1}^{h}(\mu_i(r))! r^{\mu_i(r)} \geq 4$. This completes the proof.
\end{proof}
	
\begin{lemma}\label{bound k}
Let $h$ and $k$ be integers such that $h \geq 4$ and $k \geq \frac{h (h - 1)}{2}$. Then the following inequality holds:
\begin{multline*}
  k (k - 1)\cdots (k - h + 3) \leq k^{h - 2} - \frac{(h - 2)(h - 3)}{2} k^{h - 3} 
  + \frac{(h - 2)(h - 3)(h - 4)(3 h - 7)}{24} k^{h - 4}.
\end{multline*}
\end{lemma}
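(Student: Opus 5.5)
We need to bound $P(k)=\prod_{j=0}^{h-3}(k-j)$, a product of $h-2$ factors. We will expand it in powers of $k$ and bound the tail.

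\textbf{Expansion.} Writing $N=h-2$, we have
\[
P(k)=\prod_{j=0}^{N-1}(k-j)=\sum_{s=0}^{N}(-1)^s e_s(0,1,\dots,N-1)\,k^{N-s},
\]
where $e_s$ is the elementary symmetric polynomial of \eqref{sym-poly} evaluated at $0,1,\dots,N-1$. These coefficients are the unsigned Stirling numbers of the first kind.

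\textbf{Leading coefficients.} We have $e_0=1$ and $e_1=\binom{N}{2}=\frac{(h-2)(h-3)}{2}$. For the next one,
\[
e_2=\frac{1}{2}\Bigl(e_1^2-\sum_{j<N} j^2\Bigr)=\frac{N(N-1)(N-2)(3N-1)}{24},
\]
the standard value of $\left[{N\atop N-2}\right]$. With $N=h-2$ this equals $\frac{(h-2)(h-3)(h-4)(3h-7)}{24}$, exactly the coefficient of $k^{h-4}$ in the claim. So the first three terms of the expansion reproduce the right-hand side, and it remains to show
\[
\sum_{s\ge 3}(-1)^s e_s\,k^{N-s}\le 0 .
\]

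\textbf{Tail estimate (the main obstacle).} We pair consecutive terms: we want $e_{s+1}\le k\,e_s$ for odd $s\ge 3$. Then each pair $-e_sk^{N-s}+e_{s+1}k^{N-s-1}$ is $\le 0$, and if $N$ is odd the final unpaired term has negative sign.

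Each monomial of $e_{s+1}$ can be obtained from a monomial of $e_s$ by multiplying by one additional index $j\le N-1$. A given $(s+1)$-set arises in this way exactly $s+1$ times, and there are at most $N$ choices of the added index. Hence
\[
(s+1)\,e_{s+1}\le e_s\sum_{j<N} j=e_s\binom{N}{2},
\quad\text{so}\quad
e_{s+1}\le \frac{\binom{N}{2}}{s+1}\,e_s\le \binom{N}{2}\,e_s .
\]
Since $\binom{N}{2}=\binom{h-2}{2}<\frac{h(h-1)}{2}\le k$, this gives $e_{s+1}\le k\,e_s$, as required.

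An alternative route, if the pairing gets messy, is induction on $h$, multiplying by $(k-h+3)$ and comparing coefficients. The pairing argument is cleaner, and the hypothesis $k\ge h(h-1)/2$ is used only at this step. With the tail shown nonpositive, the stated inequality follows.
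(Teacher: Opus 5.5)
Your proof is correct and follows essentially the same route as the paper: you expand $k(k-1)\cdots(k-h+3)$ in elementary symmetric sums of $0,1,\dots,h-3$, match the first three terms to the right-hand side, and show the tail is nonpositive by pairing the terms of index $r$ and $r+1$ for odd $r\ge 3$, using $e_{r+1}\le \binom{h-2}{2}e_r\le k\,e_r$. Your version keeps the extra factor $1/(s+1)$ and does the pairing bookkeeping more cleanly than the paper, whose summation ranges start at $s=3$ and so omit the pair $f(3)$; neither difference affects the result.
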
	
	
\begin{proof}
The proof is trivial for $h = 4$ and $h = 5$. Now assume that $h \geq 6$. Note that
\begin{align*}
k (k - 1)\cdots (k - h + 3) &= k^{h - 2} + \sum_{r = 1}^{h - 3}(-1)^{r} \left(\sum_{0 \leq j_1 < \cdots < j_r \leq h - 3} j_1 \cdots j_r \right) k^{h - 2 - r},
\end{align*}
which implies that

\begin{multline}\label{bound k-eq1}
k (k - 1)\cdots (k - h + 3) \\ =  k^{h - 2} - \frac{(h - 2)(h - 3)}{2} k^{h - 3}  + \frac{(h - 2)(h - 3)(h - 4)(3h - 7)}{24} k^{h - 4}\\
+ \sum_{r = 3}^{h - 3}(-1)^{r} \left(\sum_{0 \leq j_1 < \cdots < j_r \leq h - 3} j_1 \cdots j_r \right) k^{h - 2 - r}.
\end{multline}
Now, since $k \geq \frac{h (h - 1)}{2}$, it follows that if $r \geq 3$, then

\begin{align*}
\sum_{0 \leq j_1 < \cdots < j_{r + 1} \leq h - 3} j_1 \cdots j_{r + 1} &\leq \sum_{0 \leq j_1 < \cdots < j_{r} \leq h - 3} j_1 \cdots j_{r} \left(\sum_{0 \leq j_{r + 1} \leq h - 3} j_{r + 1}\right) \notag \\
&= \frac{(h - 2)(h - 3)}{2} \sum_{0 \leq j_1 < \cdots < j_{r} \leq h - 3} j_1 \cdots j_{r} \notag \\
& \leq k \sum_{0 \leq j_1 < \cdots < j_{r} \leq h - 3} j_1 \cdots j_{r}, 
\end{align*}
and so
\begin{align}\label{bound k-eq2}
	  \left(\sum_{0 \leq j_1 < \cdots < j_{r} \leq h - 3} j_1 \cdots j_{r}\right) k^{h-2-r} - \left(\sum_{0 \leq j_1 < \cdots < j_{r + 1} \leq h - 3} j_1 \cdots j_{r + 1}\right) k^{h-3-r} \geq 0.
\end{align}
Let $f(r)$ denote the left hand side of \eqref{bound k-eq2}. Now if $h$ is odd, then it follows from \eqref{bound k-eq1} that
\begin{align*}
  \sum_{r = 3}^{h - 3}(-1)^{r} \left(\sum_{0 \leq j_1 < \cdots < j_r \leq h - 3} j_1 \cdots j_r \right) k^{h - 2 - r} = - \left(\sum_{s = 3}^{\frac{h-3}{2}}f(2s-1)\right) \leq 0, 
\end{align*}
and so it follows from \eqref{bound k-eq1} that
\begin{multline*}
  k (k - 1)\cdots (k - h + 3) \leq k^{h - 2} - \frac{(h - 2)(h - 3)}{2} k^{h - 3} 
  + \frac{(h - 2)(h - 3)(h - 4)(3 h - 7)}{24} k^{h - 4}.
\end{multline*}
If $h$ is even, then it follows from \eqref{bound k-eq1} that
\begin{multline*}
  \sum_{r = 3}^{h - 3}(-1)^{r} \left(\sum_{0 \leq j_1 < \cdots < j_r \leq h - 3} j_1 \cdots j_r \right) k^{h - 2 - r} \\
   = - \left(\sum_{s = 3}^{\frac{h-4}{2}}f(2s-1)\right) -  \left(\sum_{0 \leq j_1 < \cdots < j_{h-3} \leq h - 3} j_1 \cdots j_{h-3} \right) k \leq 0, 
\end{multline*}
and so it follows from \eqref{bound k-eq1} that
\begin{multline*}
  k (k - 1)\cdots (k - h + 3) \leq k^{h - 2} - \frac{(h - 2)(h - 3)}{2} k^{h - 3} 
  + \frac{(h - 2)(h - 3)(h - 4)(3 h - 7)}{24} k^{h - 4}.
\end{multline*}
This completes the proof.
\end{proof}
	
\section{Proof of Theorem \ref{h^A}}\label{proof}
	
\begin{proof}[Proof of Theorem \ref{h^A}] 
Let  $A = \{a_1, a_2, \dots, a_k\} \subseteq G$. It follows from the Lemma \ref{identity 1} that
\begin{multline*}\label{equation 4.1}
	R(m) = R_1(m) - \frac{h^2 - h}{2} R_2(m)+  \frac{h!}{8(h - 4)!} R_3(m) \\
	+ \sum_{i = 4}^{p(h)} (-1)^h h!\prod_{r = 1}^{h} \frac{(-1)^{\mu_i (r)}} {(\mu_i (r))! r^{\mu_i (r)}} R_i(m).
\end{multline*}
We write
\begin{equation}\label{equation 4.4}
	R(m) = R_1(m) + S_1 + S_2, 
\end{equation}
where
\begin{equation*}\label{equation 4.2}
	S_1 = -\frac{h^2 - h}{2} R_2(m)+  \frac{h!}{8(h - 4)!} R_3(m),
\end{equation*}
and 
\begin{equation}\label{equation 4.3}
	S_2 = \sum_{i = 4}^{p(h)} (-1)^h h!\prod_{r = 1}^{h} \frac{(-1)^{\mu_i (r)}} {(\mu_i (r))! r^{\mu_i (r)}} R_i(m).
\end{equation}
We derive the lower bounds for $R_1(m), S_1$ and $S_2$ separately. First we estimate  $R_1(m)$. If $m \in G$, then using Lemma \ref{cha-identity 1}, we can write
\begin{align*}
R_1(m)  = \sum_{\substack{a_{j_1}, a_{j_2}, \dots, a_{j_h} \in A \\ m = a_{j_1} + a_{j_2} + \cdots + a_{j_h}}} 1 & = \sum_{a_{j_1} \in A} \sum_{a_{j_2} \in A} \dots \sum_{a_{j_h} \in A} \frac{1}{n} \sum_{g \in G} \chi_{-m + a_{j_1} + a_{j_2} + \cdots + a_{j_h}}(g) \\
& = \frac{1}{n} \sum_{g \in G} S_A(g)^h \chi_{-m}(g)\\
& \geq \frac{k^h}{n} - \frac{1}{n} \sum_{g \in G \setminus\{0\}} \left|S_A(g)\right|^h\\
& \geq \frac{k^h}{n} - \left(\max_{\substack{ g \in G \setminus\{0\}}} \left|S_A(g)\right|\right)^{h-2}
\left(\frac{1}{n} \sum_{g \in G \setminus\{0\}} \left|S_A(g)\right|^2\right),
\end{align*}
and so it follows from Lemma \ref{cha-identity 2} that
\[R_1(m) \geq \frac{k^h}{n} - \left(\max_{\substack{ g \in G \setminus\{0\}}} \left|S_A(g)\right|\right)^{h-2} \left( k - \frac{k^2}{n}\right).\]
Now it follows from Lemma \ref{max ablian} that
\begin{equation}\label{equation 4.5}
R_1(m) \geq \frac{k^h}{n} - \frac{n^{h - 2}}{3^{h - 2}} \left(k - \frac{k^2}{n}\right) = \frac{k}{3^{h - 2}n} \left(3^{h - 2}k^{h - 1} - n^{h - 1} + n^{h - 2} k\right) . 
\end{equation}
		
Now we estimate the value of $S_1$. It follows from Lemma \ref{bound R} that 
\begin{align*}
S_1 &= - \frac{h^2 - h}{2} R_2(m) + \frac{h!}{8(h - 4)!} R_3(m) \\ & \geq -  \frac{h^2 - h}{2} \left(k (k - 1) \cdots (k - h + 3) + \binom{h - 2}{2} R_3(m)\right) +  \frac{h!}{8 (h - 4)!} R_3(m)\\
&= - \frac{h^2 - h}{2}\left(k (k - 1) \cdots (k - h + 3)\right) - \frac{h!}{8 (h - 4)!} R_3(m).
\end{align*}
Since $\alpha_h < \alpha \leq 1$, it follows that
\[k \geq \alpha n > \alpha\frac{3^{h-2}(h^2-h)}{2 \left(3^{h - 2} \alpha^{h - 1} + \alpha - 1\right)} \geq \frac{(h^2 - h)}{2} = \frac{h(h - 1)}{2}.\]
Therefore, an application of Lemma \ref{bound k} gives
\[S_1 \geq -\frac{h^2 - h}{2} k^{h - 2} + \frac{h!}{4 (h - 4)!} k^{h - 3} - \frac{h!(h - 4)(3 h - 7)}{48 (h - 4)!} k^{h - 4} - \frac{h!}{8 (h - 4)!} k^{h - 3}.\]
By simplifying the above expression, we get
\begin{equation}\label{equation 4.6}
S_1 \geq - \frac{h^2 - h}{2} k^{h - 2} + \frac{h!}{8 (h - 4)!} k^{h - 3} - \frac{h! (h - 4)(3 h - 7)}{48 (h - 4)!} k^{h - 4}.
\end{equation}
Note that in \eqref{equation 4.3}, coefficient of $R_{i_0}(m)$ is positive, and by Lemma \ref{bound R}, we have $\prod_{r = 1}^{h}(\mu_i (r))! r^{\mu_i (r)} \geq 4$ for $i \geq 4$ and $i \neq i_0.$ Therefore, we get 
\begin{align*}
S_2 = \sum_{i = 4}^{p(h)} (-1)^h h! \prod_{r = 1}^{h} \frac{(-1)^{\mu_i (r)}} {(\mu_i (r))! r^{\mu_i (r)}} R_i(m) \geq \sum_{\substack{ i = 4 \\ i \neq i_0}}^{p(h)}(-1)^h h!\prod_{r = 1}^{h} \frac{(-1)^{\mu_i (r)}} {(\mu_i (r))! r^{\mu_i (r)}} R_i(m),
\end{align*}
and so
\begin{equation}\label{equation 4.7}
	S_2 \geq - \frac{h!(p(h) - 4)}{4} k^{h - 4}.
\end{equation}
Now using the inequalities \eqref{equation 4.5}--\eqref{equation 4.7} in \eqref{equation 4.4}, we get
\begin{multline}\label{equation 4.7a}
R(m) \geq \frac{k^h}{n} -\frac{n^{h - 2}}{3^{h - 2}} \left(k -\frac{k^2}{n}\right) - \left(\frac{h^2 - h}{2}\right) k^{h - 2} \\
+ \left(\frac{h!}{8 (h - 4)!} \right)k^{h - 3} - \frac{h!(h - 4) (3 h - 7)}{48 (h - 4)!} k^{h - 4} - \frac{(p(h) - 4) h!}{4} k^{h - 4}.
\end{multline}
Since  
\[k \geq \alpha n\] 
and
\[ n > \max \left\{\frac{3^{h - 2} (h^2 - h)}{2 \left(3^{h - 2} \alpha^{h - 1} + \alpha - 1 \right)}, \frac{ 12 (p(h) - 4) ((h - 4)!) + (3 h - 7) (h - 4)}{6 \alpha}\right\},\]
it follows that
\[k > \max \left\{\alpha \frac{3^{h - 2} (h^2 - h)}{2 \left(3^{h - 2}\alpha^{h - 1} + \alpha - 1 \right)}, \frac{12 (p(h) - 4) (h - 4)! + (3 h - 7) (h - 4)}{6} \right\}.\]
Therefore,
\begin{align*}
	\frac{k}{3^{h - 2} n} \left(3^{h - 2} k^{h - 1} - n^{h - 1} + n^{h - 2} k\right) & \geq \frac{\alpha n}{3^{h - 2} n} \left(3^{h - 2} \alpha^{h - 1} n^{h - 1} - n^{h - 1} + n^{h - 2} \alpha n\right)\\   
	& \geq \frac{\alpha n^{h - 1}}{3^{h - 2}} \left(3^{h - 2} \alpha^{h - 1} - 1 + \alpha\right)\\
	& > n^{h - 2} \left(\frac{h^2 - h}{2}\right)\\
	& \geq k^{h - 2}\left(\frac{h^2 - h}{2}\right).
\end{align*}
Thus 
\begin{equation}\label{equation 4.8}
\frac{k^h}{n} - \frac{n^{h - 2}}{3^{h - 2}} \left(k - \frac{k^2}{n}\right) - \left(\frac{h^2 - h}{2}\right) k^{h - 2} > 0.
\end{equation}
Also, since		
\[k \geq \alpha n \geq \frac{12 (p(h) - 4) (h - 4)! + (3 h - 7) (h - 4)}{6},\]
it follows that
\begin{equation}\label{equation 4.9}
\left(\frac{h!}{8 (h - 4)!} \right) k^{h - 3} - \frac{h! (h - 4) (3 h - 7)}{48 (h - 4)!} k^{h - 4} - \frac{(p(h)- 4) h!}{4} k^{h - 4} \geq 0.
\end{equation}
Now using the inequalities \eqref{equation 4.8} and \eqref{equation 4.9} in \eqref{equation 4.7a}, we get
\begin{align*}
	R(m) > 0,
\end{align*}
and so
\[h^{\wedge} A = G.\]

Next we prove the properties of $\alpha_h$. Let  $f_h(x) = 3^{h-2} x^{\,h-1} + x - 1$. The function $f_h(x)$ is strictly increasing for $x > 0$. Since $f_h\left(\frac{1}{3}\right) < 0$ and $f_h\left(\frac{1}{2}\right) > 0$, it follows that $f_h(x)$ has a unique positive root $\alpha_h$ that lies in the interval $\left(\frac{1}{3}, \frac{1}{2}\right)$. Since $f_h(\alpha_h) = 0$, we have $\alpha_h - 1 = -3^{h-2} \alpha_h^{h-1}$. Since $\alpha_h > \frac{1}{3}$, it follows that $f_{h+1}(\alpha_h) > 0$. Thus
\[f_{h+1}(\alpha_{h+1}) = 0 < f_{h+1}(\alpha_h).\]
Since $f_{h+1}(x)$ is strictly increasing for $x > 0$, it follows that $\alpha_{h+1} < \alpha_h$ for $h \geq 4$.
It is easy to show that
\[\lim_{h \to \infty} \alpha_h = \frac{1}{3}.\]
This completes the proof.
\end{proof}

	
\section*{Acknowledgment}
The research of the first named author is supported by the UGC Fellowship (NTA Ref. No.: 231610040283).

\end{document}